\def\ra {\rightarrow}
\def\be{\begin{equation}}   \def\ee{\end{equation}}
\def\ba   {\begin{array}}      \def\ea   {\end{array}}
\def\bea  {\begin{eqnarray}}   \def\eea  {\end{eqnarray}}
\def\bean {\begin{eqnarray*}}  \def\eean {\end{eqnarray*}}
\newtheorem{theorem} {Theorem}
\newtheorem{lemma}{Lemma}
\newtheorem{definition} {Definition}
\newtheorem{corollary} {Corollary}
\newcommand{\bo} {\ensuremath{{\bf i_1}}}
\newcommand{\bos}{\ensuremath{{\bf i_1^{\text 2}}}}
\newcommand{\bts}{\ensuremath{{\bf i_2^{\text 2}}}}
\newcommand{\bj}{\ensuremath{{\bf j}}}
\newcommand {\bjp}{\ensuremath{{\bf j_1}}}
\newcommand {\bjps}{\ensuremath{{\bf j_1^{\text 2}}}}
\newcommand {\bjd}{\ensuremath{{\bf j_2}}}
\newcommand {\bjds}{\ensuremath{{\bf j_2^{\text 2}}}}
\newcommand{\bt} {\ensuremath{{\bf i_2}}}
\newcommand{\bb} {\ensuremath{{\bf i_3}}}
\newcommand{\bbs} {\ensuremath{{\bf i_3^{\text 2}}}}
\newcommand{\bq} {\ensuremath{{\bf i_4}}}
\newcommand{\bqs} {\ensuremath{{\bf i_4^{\text 2}}}}
\newcommand{\bk} {\ensuremath{{\bf i_{k}}}}
\newcommand {\bjt}{\ensuremath{{\bf j_3}}}
\newcommand {\bjts}{\ensuremath{{\bf j_3^{\text 2}}}}
\newcommand{\bil} {\ensuremath{{\bf i_l}}}
\newcommand{\bim} {\ensuremath{{\bf i_m}}}
\newcommand{\ett} {\ensuremath{\gamma_2}}
\newcommand{\etc} {\ensuremath{\Ol{\gamma}_2}}
\newcommand{\Ol}{\overline}
\newcommand{\mC}{\ensuremath{\mathbb{C}}}
\newcommand{\mD}{\ensuremath{\mathbb{D}}}
\newcommand{\mN}{\ensuremath{\mathbb{N}}}
\newcommand{\mR}{\ensuremath{\mathbb{R}}}
\newcommand{\mT}{\ensuremath{\mathbb{T}}}
\newcommand{\mM}{\ensuremath{\mathbb{M}}}
\newcommand{\mMan}{\ensuremath{\mathcal{M}}}
\newcommand{\mTet}{\ensuremath{\mathcal{T}}}
\newcommand{\op}{\left(}
\newcommand{\fp}{\right)}
\newcommand{\oa}{\left\{}
\newcommand{\fa}{\right\}}
\newcommand{\lc}{\left[}
\newcommand{\rc}{\right]}
\newcommand{\Qpit}{\ensuremath{\left\lbrace Q_{p,c}^m(0) \right\rbrace_{m=1}^{\infty}}}
\newcommand{\mManp}{\ensuremath{\mathcal{M}^p}}
\newcommand{\mHt}{\ensuremath{\mathcal{H}^2}}
\newcommand{\mHyb}{\ensuremath{\mathcal{H}}}
\newcommand{\cP}{\ensuremath{\mathcal{P}}}
\newcommand{\bHpc}{\ensuremath{\mathbf{H}_{p,c}}}
\title{Tricomplex dynamical systems generated by polynomials of even degree}
\author[1]{Pierre-Olivier Paris\'e\thanks{E-mail: {\tt pierre-olivier.parise@uqtr.ca}}}
\author[2]{Thomas Ransford \thanks{E-mail: {\tt ransford@mat.ulaval.ca}}}
\author[1]{Dominic Rochon\thanks{E-mail: {\tt dominic.rochon@uqtr.ca}}}
\affil[1]{Département de mathématiques et d'informatique, Université du Québec\\
 C.P. 500, Trois-Rivières, Québec, Canada, G9A 5H7\\}
\affil[2]{Département de mathématiques et de statistique, Université Laval\\
1045, av. de la Médecine, Québec, Canada, G1V 0A6.}
\begin{document}
\maketitle

\begin{abstract}
In this article, we give the exact interval of the cross section of the \textit{Multibrot} sets generated by the polynomial $z^p+c$ where $z$ and $c$ are complex numbers and $p \geq 2$ is an even integer. Furthermore, we show that the same Multibrots defined on the hyperbolic numbers are always squares. Moreover, we give a generalized 3D version of the hyperbolic Multibrot set and prove that our generalization is an octahedron for a specific 3D slice of the tricomplex polynomial $\eta^p+c$ where $p \geq 2$ is an even integer.
\end{abstract}\vspace{0.5cm}
\textbf{Keywords:} Tricomplex dynamics, Multibrot, Hyperbrot, Generalized Mandelbrot sets, Multicomplex numbers, 3D fractals\\
\noindent\textbf{AMS subject classification:} 37F50, 32A30, 30G35, 00A69

\section*{Introduction}
Multicomplex dynamics appears for the first time in 2000 (see \cite{Rochon1} and \cite{Rochon2}). The author of these articles used a commutative generalization of complex numbers called the bicomplex numbers, denoted $\mM (2)$, $\mathbb{BC}$ or $\mT$, to extend the well known Mandelbrot set in four dimensions and to give a 3D version of it.

Another way to generalize the Mandelbrot set is to consider the Multibrot sets (also called Mandelbrot set). In \cite{Gujar}, \cite{Papathomas}, \cite{RochonParise} and \cite{Chine1}, a Multibrot set of order $p$ is defined as
	\begin{align*}
	\mManp := \oa c \in \mC \, : \, \oa Q_{p,c}^n(0) \fa_{n=1}^{\infty} \text{ is bounded } \fa
	\end{align*}
where $Q_{p,c}(z) = z^p + c$ with $z,c \in \mC$ and $p \geq 2$ is an integer. For $p = 2$, this is exactly the classical Mandelbrot set. It is well known that the Mandelbrot set is connected and it crosses the real axis on the interval $[-2, \frac{1}{4} ]$. Moreover, from \cite{RochonParise} and \cite{PariseRochon2}, we know that 
\begin{equation}
	\mMan^p \cap \mR = \lc -\frac{p-1}{p^{p/(p-1)}}, \frac{p-1}{p^{p/(p-1)}} \rc 
	\end{equation}
for any odd integer $p > 2$. The proof was based on a precise analysis of the roots of the polynomial $g_c(z):=z^p - z +c$. However, the following characterization 
	\begin{align}
	\mManp \cap \mR = \lc -2^{1/(p-1)}, \frac{p-1}{p^{p/(p-1)}} \rc \label{EvenCharacterization}
	\end{align}
for an even integer $p \geq 2$ was left unresolved in \cite{PariseRochon2}. In this article, we use a new approach to prove that the Conjecture \eqref{EvenCharacterization} is true and has many consequences in tricomplex dynamics.

The article is separated into three sections. In the first section, we recall some basics of the theory of tricomplex numbers. In the second section, we give the proof of the main theorem related with the Conjecture \eqref{EvenCharacterization}. In the third section, we prove that the Hyperbrot set of order $p$ defined as
	\begin{equation}
	\mHyb^p:=\oa c \in \mD \, : \, \oa Q_{p,c}^m(0)\fa_{m=1}^{\infty } \text{ is bounded} \fa \label{HyperbrotDef}
	\end{equation}
where $\mD$ is the set of hyperbolic numbers (see \cite{RochonShapiro}, \cite{vajiac2} and \cite{Sobczyk}) is a square when the degree $p \geq 2$ is an even integer. Finally, we define the generalized 3D version of the Hyperbrot sets and prove that they are regular octahedrons.


\section{Preliminaries}\label{Premilinaries}
In this section, we begin by introducing the tricomplex space $\mM (3)$. One may refer to \cite{Baley}, \cite{GarantRochon}, \cite{GarantPelletier}, \cite{Parise} and \cite{Vajiac} for more details on the next properties.

A tricomplex number $\eta$ is composed of two coupled bicomplex numbers $\zeta_1$, $\zeta_2$ and an imaginary unit $\bb$ such that
	\begin{equation}
	\eta=\zeta_1 + \zeta_2 \bb \label{eq2.1}
	\end{equation}
where $\bbs=-1$. The set of such tricomplex numbers is denoted by $\mM (3)$. Since $\zeta_1,\zeta_2 \in \mM (2)$, we can write them as $\zeta_1=z_1+ z_2\bt$ and $\zeta_2=z_3+ z_4\bt$ where $z_1,z_2,z_3,z_4 \in \mM (1)\simeq \mC$. In that way, \eqref{eq2.1} can be rewritten as
	\begin{equation}
	\eta=z_1+ z_2 \bt+  z_3 \bb+  z_4 \bjt\label{eq2.2}
	\end{equation}
where $\bts=-1$, $\bt \bb = \bb \bt = \bjt$ and $\bjts=1$. Moreover, as $z_1$, $z_2$, $z_3$ and $z_4$ are complex numbers (in $\bo$), we can write the number $\eta$ in a third form as
	\begin{align}
	\eta&=a+ b\bo + (c+  d\bo)\bt + (e +  f\bo)\bb + (g +  h\bo)\bjt\notag\\
	&=a+ b\bo +  c\bt +  d\bjp +  e\bb +  f\bjd + g \bjt +  h\bq\label{eq2.3}
	\end{align}
where $\bos=\bqs=-1$, $\bq =\bo \bjt = \bo \bt \bb$, $\bjd = \bo \bb = \bb \bo$, $\bjds=1$, $\bjp=\bo \bt = \bt \bo$ and $\bjps=1$. After ordering each term of \eqref{eq2.3}, we get the following representations of the set of tricomplex numbers:
	\begin{align}
	\mM (3) &:= \oa \eta = \zeta_1 +  \zeta_2\bb \, : \, \zeta_1, \zeta_2 \in \mM (2) \fa \notag\\
&=\oa z_1+ z_2 \bt+  z_3 \bb+ z_4 \bjt  \, : \, z_1,z_2,z_3,z_4 \in \mM (1) \fa \notag\\
&=\oa x_0+ x_1\bo + x_2\bt  +  x_3\bb + x_4\bq  +  x_5\bjp +  x_6 \bjd+  x_7\bjt \, \right. \notag \\ & \qquad \qquad \left. : \, x_i \in \mM (0)=\mR \text{ for } i=0,1,2, \ldots , 7 \fa \text{.} \label{EqRep}
	\end{align}
Let $\eta_1=\zeta_1+\zeta_2\bb$ and $\eta_2=\zeta_3 + \zeta_4\bb$ be two tricomplex numbers with $\zeta_1,\zeta_2,\zeta_3,\zeta_4 \in \mM (2)$. We define the equality, the addition and the multiplication of two tricomplex numbers as
	\begin{align}
	\eta_1&=\eta_2 \text{ iff } \zeta_1=\zeta_3 \text{ and } \zeta_2=\zeta_4 \label{eq2.4}\\
\eta_1 + \eta_2 &:= (\zeta_1 + \zeta_3) + (\zeta_2+\zeta_4)\bb \label{eq2.5}\\
\eta_1 \cdot \eta_2&:= (\zeta_1\zeta_3-\zeta_2\zeta_4)+(\zeta_1\zeta_4 + \zeta_2\zeta_3)\bb \label{eq2.6}\text{.}
	\end{align}
Table \ref{tabC1} shows the results after multiplying each tricomplex imaginary unity two by two. The set of tricomplex numbers with addition $+$ and multiplication $\cdot$ forms a commutative ring with zero divisors.
	\begin{table}
	\centering
		\begin{tabular}{c|*{9}{c}}
		$\cdot$ & 1 & $\mathbf{i_1}$ & $\mathbf{i_2}$ & $\mathbf{i_3}$ & $\mathbf{i_4}$ & $\bjp$ & $\bjd$ & $\mathbf{j_3}$\\\hline
1 & 1 & $\mathbf{i_1}$ & $\mathbf{i_2}$ & $\mathbf{i_3}$ & $\mathbf{i_4}$ & $\bjp$ & $\bjd$ & $\mathbf{j_3}$\\
$\mathbf{i_1}$ & $\mathbf{i_1}$ & $-\mathbf{1}$ & $\bjp$ & $\bjd$ & $-\mathbf{j_3}$ & $-\mathbf{i_2}$ & $-\mathbf{i_3}$ & $\mathbf{i_4}$\\
$\mathbf{i_2}$ & $\mathbf{i_2}$ & $\bjp$ & $-\mathbf{1}$ & $\mathbf{j_3}$ & $-\bjd$ & $-\mathbf{i_1}$ & $\mathbf{i_4}$ & $-\mathbf{i_3}$\\
$\mathbf{i_3}$ & $\mathbf{i_3}$ & $\bjd$ & $\mathbf{j_3}$ & $-\mathbf{1}$ & $-\bjp$ & $\mathbf{i_4}$ & $-\mathbf{i_1}$ & $-\mathbf{i_2}$\\
$\mathbf{i_4}$ & $\mathbf{i_4}$ & $-\mathbf{j_3}$  & $-\bjd$ & $-\bjp$ & $-\mathbf{1}$ & $\mathbf{i_3}$ & $\mathbf{i_2}$ & $\mathbf{i_1}$\\
$\bjp$ & $\bjp$ & $-\mathbf{i_2}$  & $-\mathbf{i_1}$ & $\mathbf{i_4}$ & $\mathbf{i_3}$ & $\mathbf{1}$ & $-\mathbf{j_3}$ & $-\bjd$\\
$\bjd$ & $\bjd$ & $-\mathbf{i_3}$  & $\mathbf{i_4}$ & $-\mathbf{i_1}$ & $\mathbf{i_2}$ & $-\mathbf{j_3}$ &  $\mathbf{1}$ & $-\bjp$\\
$\mathbf{j_3}$ & $\mathbf{j_3}$ & $\mathbf{i_4}$ &$-\mathbf{i_3}$  & $-\mathbf{i_2}$ & $\mathbf{i_1}$ & $-\bjd$ & $-\bjp$ &  $\mathbf{1}$ \\
		\end{tabular}
	\caption{Products  of tricomplex imaginary units}\label{tabC1}
	\end{table}

A tricomplex number has a useful representation using the idempotent elements $\ett =\frac{1+\bjt}{2}$ and $\etc =\frac{1-\bjt}{2}$. Recalling that $\eta = \zeta_1 +  \zeta_2\bb$ with $\zeta_1, \zeta_2 \in \mM (2)$, the idempotent representation of $\eta$ is
	\begin{equation}
	\eta = (\zeta_1- \zeta_2\bt)\ett + (\zeta_1+ \zeta_2\bt)\etc \label{eq2.7}\text{.}
	\end{equation}
The representation \eqref{eq2.7} of a tricomplex number allows to add and multiply tricomplex numbers term-by-term. In fact, we have the following theorem (see \cite{Baley}):
	\begin{theorem}\label{theo2.2}
	Let $\eta_1=\zeta_1 +  \zeta_2\bb$ and $\eta_2=\zeta_3 +  \zeta_4\bb$ be two tricomplex numbers. Let $\eta_1=u_1\ett + u_2 \etc$ and $\eta_2=u_3\ett + u_4\etc$ be the idempotent representation \eqref{eq2.7} of $\eta_1$ and $\eta_2$. Then,
	\begin{enumerate}
\item $\eta_1+\eta_2=(u_1+u_3)\ett + (u_2+u_4)\etc$;
\item $\eta_1 \cdot \eta_2 = (u_1 \cdot u_3)\ett + (u_2 \cdot u_4)\etc$;
\item $\eta_1^m=u_1^m \ett + u_2^m \etc$ $\forall m \in \mN$.
	\end{enumerate}
	\end{theorem}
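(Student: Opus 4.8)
\emph{Proof proposal.} The plan is to reduce all three assertions to the algebraic properties of the pair $\{\ett,\etc\}$. First I would record that these two elements form a complete system of orthogonal idempotents in the commutative ring $\mM(3)$, that is
\[
\ett^2=\ett,\qquad \etc^2=\etc,\qquad \ett\,\etc=\etc\,\ett=0,\qquad \ett+\etc=1.
\]
Each identity is a one-line computation using only $\bjts=1$: for example $\ett^2=\frac{(1+\bjt)^2}{4}=\frac{1+2\bjt+\bjts}{4}=\frac{1+\bjt}{2}=\ett$, and $\ett\,\etc=\frac{(1+\bjt)(1-\bjt)}{4}=\frac{1-\bjts}{4}=0$, while $\ett+\etc=\frac{(1+\bjt)+(1-\bjt)}{2}=1$.

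With these in hand, Part~1 is immediate from the distributive law in $\mM(3)$: collecting the $\ett$- and $\etc$-terms in $\eta_1+\eta_2=(u_1\ett+u_2\etc)+(u_3\ett+u_4\etc)$ gives $(u_1+u_3)\ett+(u_2+u_4)\etc$. For Part~2, I would expand the product, using commutativity and associativity of $\mM(3)$,
\[
\eta_1\cdot\eta_2=(u_1\ett+u_2\etc)(u_3\ett+u_4\etc)=u_1u_3\,\ett^2+(u_1u_4+u_2u_3)\,\ett\,\etc+u_2u_4\,\etc^2,
\]
and then substitute $\ett^2=\ett$, $\etc^2=\etc$, $\ett\,\etc=0$ to obtain $(u_1\cdot u_3)\ett+(u_2\cdot u_4)\etc$. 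The only subtlety worth flagging is that $u_1,u_2,u_3,u_4$ lie in $\mM(2)$, which we view inside $\mM(3)$ via $\zeta\mapsto\zeta+0\,\bb$; since $\mM(3)$ is commutative, these factors commute with $\ett$ and $\etc$, which is exactly what legitimizes the rearrangement above.

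Part~3 then follows by induction on $m$. The case $m=1$ is nothing but the representation \eqref{eq2.7}. Assuming $\eta_1^m=u_1^m\,\ett+u_2^m\,\etc$, one applies Part~2 to $\eta_1^{m+1}=\eta_1^m\cdot\eta_1$ and gets $\eta_1^{m+1}=(u_1^m\cdot u_1)\ett+(u_2^m\cdot u_2)\etc=u_1^{m+1}\,\ett+u_2^{m+1}\,\etc$, completing the induction. I do not expect a genuine obstacle here: the entire content sits in the idempotent/orthogonality identities of the first step, together with the bookkeeping of keeping track of which ring each factor belongs to; everything else is routine manipulation in a commutative ring.
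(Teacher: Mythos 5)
Your proposal is correct and complete: the four identities $\ett^2=\ett$, $\etc^2=\etc$, $\ett\etc=0$, $\ett+\etc=1$ are exactly what is needed, and the distributivity/induction bookkeeping is sound. The paper itself offers no proof of this theorem (it simply cites Baley Price's monograph), and the argument you give is the standard one found there, so there is nothing to compare beyond noting that your write-up supplies the routine verification the paper leaves to the reference.
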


Moreover, we define the norm $\Vert \cdot \Vert_3 :\, \mM (3) \rightarrow \mR$ of a tricomplex number $\eta=\zeta_1 + \zeta_2\bb$ as
	\begin{align}
	\Vert \eta \Vert_3 & := \sqrt{\Vert\zeta_1\Vert_2^2+\Vert\zeta_2\Vert_2^2}=\sqrt{\sum_{i=1}^2|z_i|^2+\sum_{i=3}^4|z_i|^2}\label{eq2.15}\\
&=\sqrt{\sum_{i=0}^7x_i^2}.\notag
	\end{align}
According to the Euclidean norm \eqref{eq2.15}, we say that a sequence $\oa s_m \fa_{m=1}^{\infty} $ of tricomplex numbers is bounded if and only if there exists a real number $M$ such that $\Vert s_m \Vert_3 \leq M$ for all $m \in \mN$. 

Finally, we recall (see \cite{GarantRochon} and \cite{RochonParise}) an important subset of $\mM (3)$ that is useful to section \ref{consequenceOfMainTheorem}.
	\begin{definition}\label{Tikilim}
	Let $\bk , \bil , \bim \in \oa 1, \bo , \bt , \bb , \bq , \bjp , \bjd , \bjt \fa$ with $\bk \neq \bil$, $\bk \neq \bim$ and $\bil \neq \bim$. We define a 3D subset of $\mM (3)$ as
		\begin{equation}
		\mT (\bim , \bk , \bil ):= \oa x_1\bk + x_2\bil + x_3\bim \, : \, x_1,x_2,x_3 \in \mR \fa \text{.}
		\end{equation}
	\end{definition}

\section{Proof of the main theorem}\label{proofOfMainTheo}
In this section, we show that the intersection of a Multibrot set with the real line is exactly an interval. Then, our aim is to prove the following result.
	\begin{theorem}\label{t2.3.1}
	Let $p$ be an even integer with $p\geq 2$. Then
	\begin{align}
	\mManp \cap \mR = \lc -2^{1/(p-1)}, \frac{p-1}{p^{p/(p-1)}} \rc .
	\end{align}
	\end{theorem}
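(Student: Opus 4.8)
The plan is to reduce everything to elementary analysis of the real iteration $x \mapsto x^p + c$ and to establish the two inclusions $\mManp \cap \mR \supseteq [-2^{1/(p-1)},\, \tfrac{p-1}{p^{p/(p-1)}}]$ and $\mManp \cap \mR \subseteq [-2^{1/(p-1)},\, \tfrac{p-1}{p^{p/(p-1)}}]$ separately. Write $Q = Q_{p,c}$; for $c \in \mR$ the orbit $x_0 = 0$, $x_{n+1} = Q(x_n)$ is real, so $c \in \mManp$ if and only if $(x_n)_{n\geq 0}$ is bounded. It is convenient to abbreviate $\rho := p^{-1/(p-1)}$, so that the right endpoint equals $\rho - \rho^p = \tfrac{p-1}{p^{p/(p-1)}} =: r$, and $s := 2^{1/(p-1)}$, so that $s^{p-1} = 2$ and the left endpoint is $-s$. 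The features of even $p$ that I will use repeatedly are that $x^p = |x|^p \geq 0$ for every real $x$, and that $Q$ is increasing on $[0,\infty)$ but decreasing on $(-\infty,0]$.

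For the inclusion $\supseteq$ I would exhibit, for each $c$ in the interval, a bounded forward-invariant interval containing $0$. When $0 \leq c \leq r$, the interval $[0,\rho]$ works: for $x \in [0,\rho]$ one has $0 \leq Q(x) = x^p + c \leq \rho^p + r = \rho$. When $-s \leq c \leq 0$, write $c = -a$ with $0 \leq a \leq s$; then the symmetric interval $[-a, a]$ works, because for $|x| \leq a$ one has $-a \leq Q(x) = x^p - a \leq a^p - a \leq a$, the last step being precisely the inequality $a^{p-1} \leq 2$. In both cases the orbit of $0$ stays in a bounded interval, hence $c \in \mManp$.

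For the inclusion $\subseteq$ I would show that the orbit of $0$ escapes to $+\infty$ whenever $c > r$ or $c < -s$. If $c > r$: since $p-1$ is odd, $\psi(x) := x^p - x$ has a unique critical point $x = \rho$, which is its global minimum with $\psi(\rho) = \rho^p - \rho = -r$; hence $Q(x) - x = \psi(x) + c \geq c - r > 0$ for all $x$, so $x_{n+1} - x_n \geq c - r$ and $x_n \to +\infty$. If $c < -s$: write $c = -a$ with $a > s > 1$, so that $a^{p-1} > 2$ and therefore $a^p - a > a$. Then $x_2 = a^p - a > a$, and by induction $x_n > a$ for all $n \geq 2$ (if $x_n > a > 0$ then $x_{n+1} = x_n^p - a > a^p - a > a$). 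Since $\psi$ is increasing on $[1,\infty)$ and $x_n > a > 1$, we obtain $x_{n+1} - x_n = \psi(x_n) - a > \psi(a) - a = a^p - 2a > 0$, a fixed positive step, so again $x_n \to +\infty$. Combining the two inclusions gives the stated equality (and incidentally shows that $\mManp \cap \mR$ is an interval).

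I expect the main difficulty to be the region $c < 0$, where $Q$ is not monotone and the orbit crosses the origin. The decisive observation there is the symmetric invariant interval $[c, -c]$, which is tailored to the even-power identity $x^p = |x|^p$ and is exactly what forces the sharp endpoint $-2^{1/(p-1)}$; for odd $p$ this interval fails to be invariant, consistent with the different value $-\tfrac{p-1}{p^{p/(p-1)}}$ known in that case. A second, related subtlety — needed to pin down the endpoint precisely — is to check that for $c < -2^{1/(p-1)}$ the two-step image $x_2 = (-c)^p + c$ already exceeds $-c > 1$, so that the orbit lands where $\psi(x) > -c$, after which the constant per-step increase $a^p - 2a$ guarantees divergence.
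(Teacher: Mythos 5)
Your proof is correct. It shares the paper's skeleton --- reduce to the real one-dimensional iteration, split into $c\ge 0$ and $c<0$, and locate the right endpoint via the minimum $-(p-1)/p^{p/(p-1)}$ of $x^p-x$ at $x=p^{-1/(p-1)}$ --- but the machinery is genuinely different. The paper runs everything through fixed points: it first proves a lemma (if $\liminf_{x\to\infty}(f(x)-x)>0$ and $x$ exceeds every fixed point, then $f^n(x)\to\infty$), shows that for $c\ge 0$ membership is equivalent to $g_c(x)=x^p-x+c$ having a nonnegative root (analyzed via its unique positive critical point), and for $c<0$ uses the trapping interval $[c,a]$ with $a$ the positive fixed point together with the test $Q_{p,c}(c)\le a$, unwound into the chain of inequalities ending at $c<-2^{1/(p-1)}$. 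You instead exhibit explicit forward-invariant intervals --- $[0,p^{-1/(p-1)}]$ for $0\le c\le \frac{p-1}{p^{p/(p-1)}}$ and the symmetric interval $[c,-c]$ for $-2^{1/(p-1)}\le c\le 0$ --- and, for escape, a uniform positive per-step drift ($c-r$ everywhere when $c>r$, and $a^p-2a$ once the orbit enters $(a,\infty)$ with $a=-c$ when $c<-2^{1/(p-1)}$), so that no fixed-point dichotomy and no root-counting for $g_c$ are needed. What your route buys is self-containedness and transparency: the interval $[c,-c]$ isolates exactly where evenness enters (via $x^p=|x|^p$) and why the threshold is $(-c)^{p-1}\le 2$; what the paper's route buys is uniformity with its treatment of the odd-degree case and a structural explanation of boundedness in terms of fixed points. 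Both arguments handle the endpoint cases $c=\frac{p-1}{p^{p/(p-1)}}$ and $c=-2^{1/(p-1)}$ correctly.
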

	
	The following lemma will be useful.
	\begin{lemma}\label{lem3.1.1}
	Let $f : \mR \ra \mR$ be a continuous function such that $\liminf_{x \ra \infty} (f(x) - x) > 0$. Let $a := \sup \oa x \, : \, f(x) = x \fa$ (or $a := -\infty$ if $f$ has no fixed points). Then, for each $x > a$, we have $f^n(x) \ra \infty$ as $n \ra \infty$.
	\end{lemma}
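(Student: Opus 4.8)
The plan is to fix $x_0 > a$, look at the orbit $x_n := f^n(x_0)$, and prove that $(x_n)_{n\ge 0}$ is strictly increasing and not bounded above; since such a real sequence necessarily tends to $+\infty$, this is exactly the conclusion. To set this up I would first unpack the hypothesis: $\liminf_{x\ra\infty}(f(x)-x) > 0$ gives a $\delta > 0$ and an $R \in \mR$ with $f(x) - x \geq \delta$ for all $x \geq R$. In particular $f$ has no fixed point in $[R,\infty)$, so the fixed-point set $F := \{\, x \in \mR : f(x) = x \,\}$ — which is closed, being the zero set of the continuous function $x \mapsto f(x)-x$ — is bounded above by $R$. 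Hence $a = \sup F < R$ is finite when $F \neq \emptyset$, and $a = -\infty < R$ by convention otherwise.

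The key step is to upgrade ``$f(x) > x$ for $x \geq R$'' to ``$f(x) > x$ for every $x > a$''. Consider $h(x) := f(x) - x$ on the interval $(a,\infty)$. It is continuous there and, by the very definition of $a$ as the supremum of $F$, it never vanishes on $(a,\infty)$; so by the intermediate value theorem $h$ has constant sign on the interval $(a,\infty)$. Since $h(x) \geq \delta > 0$ for all $x \geq R$ and $R > a$, that constant sign is positive. Therefore $f(x) > x$ for all $x > a$.

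It then remains to run the monotone-orbit argument. From $x_0 > a$ and the previous step, $x_1 = f(x_0) > x_0 > a$; inductively, if $x_n > a$ then $x_{n+1} = f(x_n) > x_n > a$. So $(x_n)$ is strictly increasing and stays in $(a,\infty)$. If it were bounded above it would converge to some finite limit $L$, and necessarily $L \geq x_1 > a$; continuity of $f$ would then give $f(L) = \lim_{n} f(x_n) = \lim_{n} x_{n+1} = L$, so $L \in F$ with $L > a = \sup F$ (or $L$ a fixed point in the case where $F$ was assumed empty), a contradiction. Hence $(x_n)$ is unbounded and, being increasing, $f^n(x_0) = x_n \ra \infty$, as required.

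I do not expect a real obstacle in this lemma; it is an elementary calculus argument. The one place deserving a little care — and the only place where both the continuity of $f$ and the choice of $a$ as the \emph{supremum} of the fixed-point set are genuinely used — is the constant-sign step that promotes the asymptotic inequality $f(x) > x$ near $+\infty$ to the inequality $f(x) > x$ on the whole ray $(a,\infty)$; everything after that is the standard ``increasing orbit with no available limit'' argument.
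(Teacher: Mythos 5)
Your argument is correct and is essentially the paper's own proof: both establish $f(x)>x$ on all of $(a,\infty)$ via the no-zeros-plus-sign-at-infinity observation, then show the orbit is increasing and cannot converge to a finite limit because that limit would be a fixed point above $a$. You simply spell out the intermediate-value and closedness details more explicitly; no difference in substance.
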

	\begin{proof}
	Since $f(x) - x$ has no zeros in $(a, \infty )$ and $\liminf_{x \ra \infty} (f(x) - x) > 0$, we have $f(x) > x$ for all $x > a$. Thus, for each $x > a$, the sequence $f^n(x)$ is increasing. If it does not tend to infinity, then it must tend to a finite limit $b > a$, and letting $n \ra \infty$ in the relation $f (f^n(x)) = f^{n+1}(x)$, we get $f(b) = b$, contradicting the fact that $f(x) > x$ for all $x > a$. \hfill $\qed$
	\end{proof}
	
	\begin{proof}[of Theorem \ref{t2.3.1}]
	We first consider the case where $c \geq 0$. Observe that $Q_{p,c}$ is an increasing function on $[0, \infty )$. Thus, if $Q_{p,c}$ has a non-negative fixed point $a \geq 0$, then $Q_{p,c}$ maps the interval $[0,a]$ into itself, so $Q_{p,c}^n(0) \in [0, a]$ for all integers $n \geq 1$. On the other hand, if $Q_{p,c}$ has no fixed point $a \geq 0$, then by Lemma \ref{lem3.1.1}, $Q_{p,c}^n(0) \ra \infty$ as $n \ra \infty$. Thus, $c \in \mManp$ if and only if $Q_{p,c}$ has a fixed point $a$ with $a \geq 0$.
	
	Now, to say that $Q_{p,c}$ has a non-negative fixed point is equivalent to saying that $g_c(x) = x^p - x + c$ has a non-negative zero. Note that $g_c(x) \ra \infty$ as $x \ra \infty$, so $g_c$ has a non-negative zero if and only if $\min_{x \geq 0} g_c(x) \leq 0$. As $g_c'(0) = -1 < 0$, this minimum must be attained at a critical point of $g_c$. The critical points of $g_c$ are the solutions of $px^{p-1} - 1 = 0$ and the unique positive critical point is thus $x = (1/p)^{1/(p-1)}$. Therefore $\min_{x \geq 0} g_c(x) = g((1/p)^{1/(p-1)})$. In particular, we have
		\begin{align*}
		\min_{x \geq 0} g_c(x) \leq 0 &\iff \op \op \frac{1}{p} \fp^{p/(p-1)} - \op \frac{1}{p} \fp^{1/(p-1)} + c \fp \leq 0 \\
		& \iff c \leq (p-1) \op \frac{1}{p} \fp^{p/(p-1)} \text{.}
		\end{align*}
	To summarize, we have shown that, for $c \geq 0$, we have
		\begin{align}
		c \in \mManp \iff c \leq \frac{p-1}{p^{p/(p-1)}} \text{.}\label{firstPart}
		\end{align}
	
	Now we turn to the case $c < 0$. Letting $g_c(x) = x^p - x + c$ once again, we have $g_c(0) = c < 0$ and $g_c(x) \ra \infty$ as $x \ra \pm \infty$, so $g_c$ has at least two zeros, one positive and one negative. Moreover, $g'_c(x)$ has a unique root, so in fact $g_c$ has exactly two zeros, one positive and one negative. Then $Q_{p,c}$ has exactly two fixed points, one positive and one negative. Denote by $a$ the positive fixed point. If $Q_{p,c}(c) > a$, then by Lemma \ref{lem3.1.1}, $Q_{p,c}^n(Q_{p,c}(c)) \ra \infty$ as $n \ra \infty$, and thus $Q_{p,c}^n(0) = Q_{p,c}^{n-2}(Q_{p,c}(c)) \ra \infty$ as $n \ra \infty$. On the other hand, if $Q_{p,c}(c) \leq a$, then, since $Q_{p,c}$ is decreasing on $[c,0]$ and increasing on $[0,a]$, it maps the interval $[c,a]$ into itself, so $Q_{p,c}^n(0) \in [c, a]$ for all integers $n \geq 1$. Thus $c \in \mManp$ if and only if $Q_{p,c}(c) \leq a$.
	
	Now $x > a$ if and only if $x \geq 0$ and $Q_{p,c}(x) > x$. Therefore,
		\begin{align*}
		Q_{p,c}(c) > a & \iff Q_{p,c}(c) \geq 0 \text{ and } Q_{p,c}(Q_{p,c}(c)) > Q_{p,c}(c) \\
		& \iff c^p + c \geq 0 \text{ and } (c^p + c)^p + c > c^p + c \\
		& \iff c^p + c > -c \\
		& \iff c^{p-1} < -2 \\
		& \iff c < -2^{1/(p-1)} \text{.}
		\end{align*}
	Thus, for $c < 0$,
		\begin{align}
		c \in \mManp \iff c \geq -2^{1/(p-1)} \text{.}\label{secondPart}
		\end{align}
	Finally, combining \eqref{firstPart} and \eqref{secondPart}, we obtain the conclusion of the theorem.\hfill $\qed$
\end{proof}

Figure \ref{figureMultibrots} shows examples of some Multibrot sets. 
\begin{figure}
\centering
\subfigure[$\mMan^2$, $\lc -2,2\rc \times \lc -2,2 \rc$]{
	\includegraphics[scale = .23]{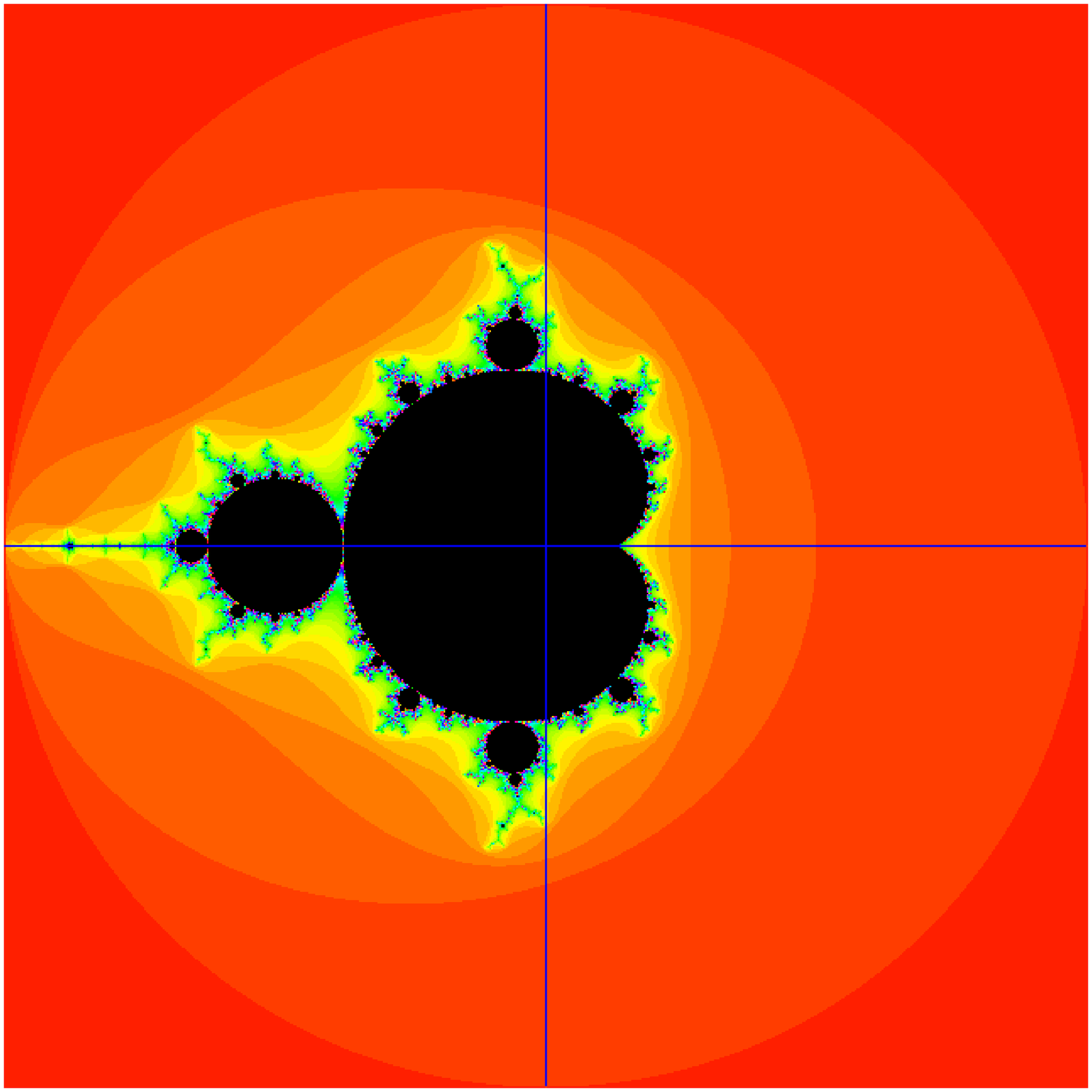}}
\subfigure[$\mMan^4$, $\lc -2^{\frac{1}{3}},2^{\frac{1}{3}}\rc \times \lc -2^{\frac{1}{3}},2^{\frac{1}{3}}\rc$]{
	\includegraphics[scale = .23]{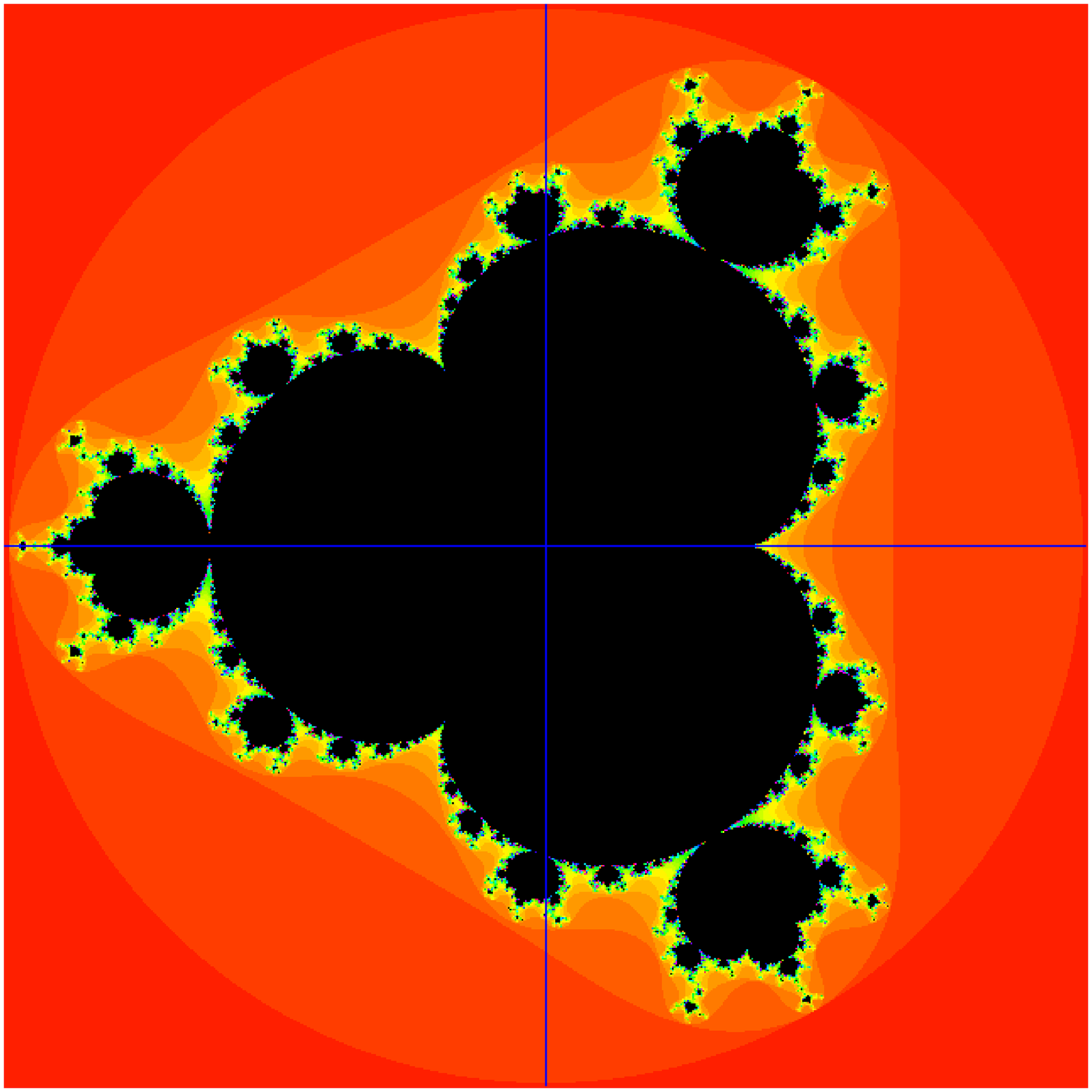}}
\subfigure[$\mMan^6$, $\lc -2^{\frac{1}{5}},2^{\frac{1}{5}}\rc \times \lc -2^{\frac{1}{5}},2^{\frac{1}{5}}\rc$]{
	\includegraphics[scale = .23]{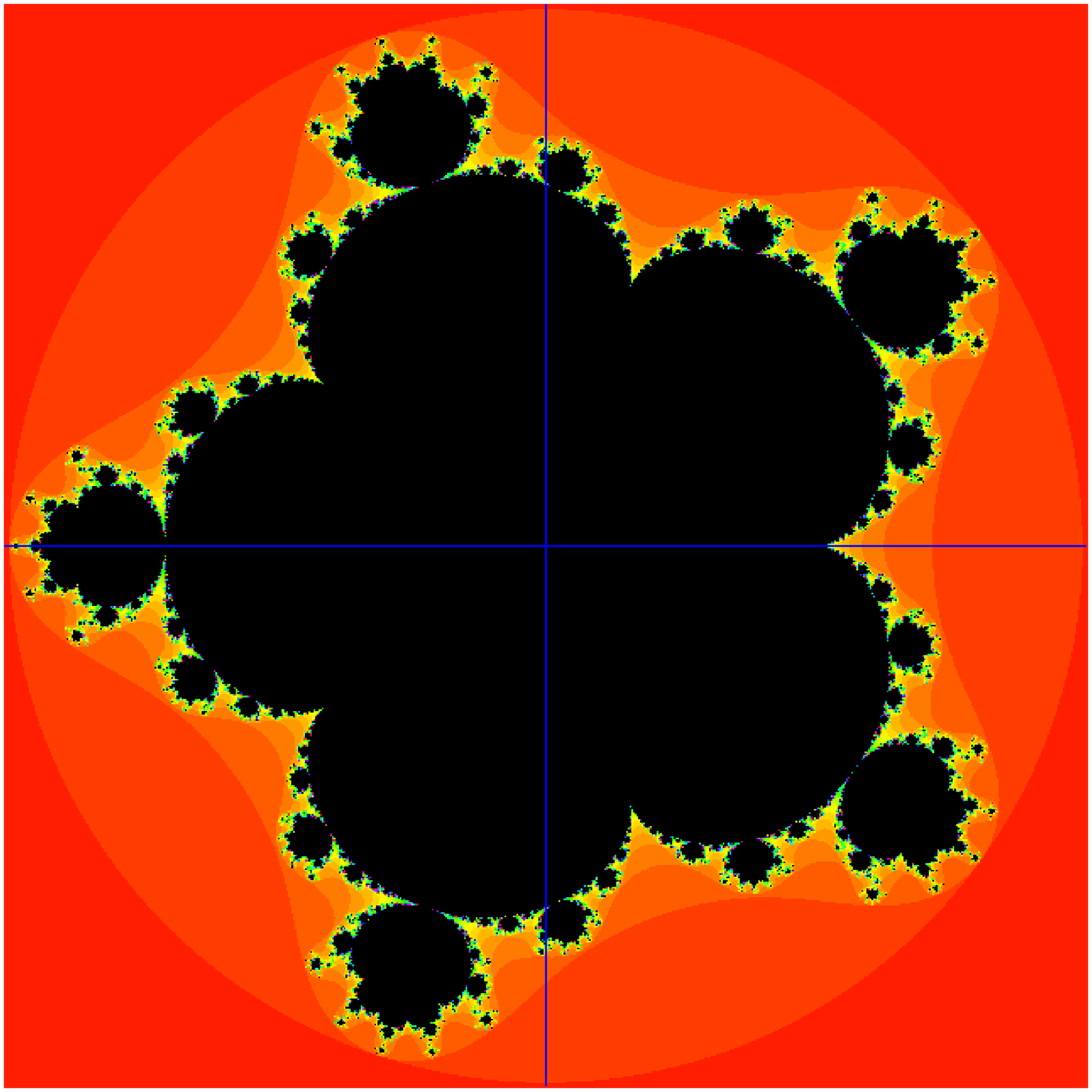}}
\subfigure[$\mMan^{12}$, $\lc -2^{\frac{1}{11}},2^{\frac{1}{11}}\rc \times \lc -2^{\frac{1}{11}},2^{\frac{1}{11}}\rc$]{
	\includegraphics[scale = 0.23]{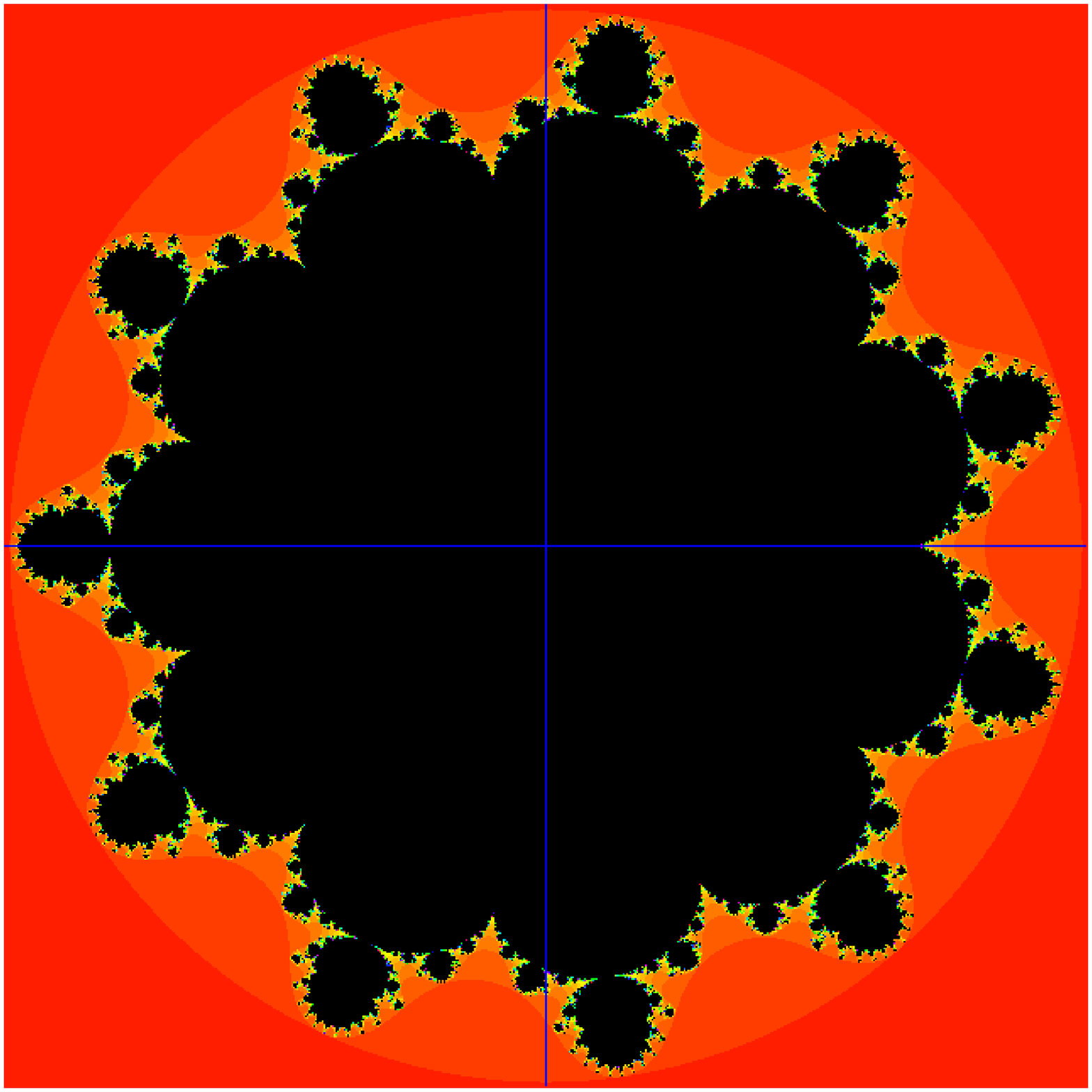}}
\caption{Multibrots for several even integers}\label{figureMultibrots}
\end{figure}

\section{Consequences of the main theorem}\label{consequenceOfMainTheorem}
In this section, we use Theorem \ref{t2.3.1} to prove that the Hyperbrots are always squares. In addition, we give a generalized 3D version of the Hyperbrot sets and prove that our generalization is a regular octahedron for even integers with $ p \geq 2$.

	\subsection{Characterization of the Hyperbrots}\label{sec3.4}
	In 1990, Senn \cite{Senn} generated the Mandelbrot set using the hyperbolic numbers. Instead of obtaining a fractal structure, the set obtained seemed to be a square. Four years later, in \cite{MET}, Metzler proved that $\mHt$ is precisely a square with diagonal length $2\frac{1}{4}$ and of side length $\frac{9}{8} \sqrt{2}$. It was also proved in \cite{PariseRochon2} that the Hyperbrots are always squares for odd integers greater than $2$. In this subsection, we generalized their results for even integers.
	
	According to the tools introduced in \cite{RochonParise} and Theorem \ref{t2.3.1}, we have the following result.
	\begin{theorem}\label{t3.2}
		Let $p\geq2$ be an even integer. Then, the Hyperbrot of order $p$ is characterized as
	\begin{equation*}
	\mHyb^p = \oa x+y\bj \in \mD \, : \, |x - t_p|+|y| \leq \frac{l_p}{2} \fa
	\end{equation*}
	where
		\begin{align*}
		t_p := \frac{ - p\, [(2p)^{1/(p-1)} - 1]-1}{2p^{p/(p-1)}} \quad \text{ and } \quad l_p := \frac{p\, [(2p)^{1/(p-1)} + 1] -1}{p^{p/(p-1)}} \text{.}
		\end{align*}
	\end{theorem}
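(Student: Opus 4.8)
The plan is to exploit the idempotent decomposition of the hyperbolic numbers so as to split the orbit $\{Q_{p,c}^m(0)\}$ appearing in \eqref{HyperbrotDef} into two \emph{real} orbits, then to apply Theorem \ref{t2.3.1} to each component, and finally to recognise the resulting constraint as a rotated square.

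First I would recall that every $c = x + y\bj \in \mD$ (with $\bj^2 = 1$ and $x,y \in \mR$) admits the idempotent representation $c = (x+y)\eb + (x-y)\ebc$, where $\eb = \tfrac{1+\bj}{2}$ and $\ebc = \tfrac{1-\bj}{2}$ are orthogonal idempotents with $\eb + \ebc = 1$ — the analogue for $\mD$ of \eqref{eq2.7}, the term-by-term arithmetic of Theorem \ref{theo2.2} carrying over verbatim in this basis. Since $0 = 0\cdot\eb + 0\cdot\ebc$ and $Q_{p,c}(z) = z^p + c$, an induction based on parts (2) and (3) of Theorem \ref{theo2.2} yields
\[
Q_{p,c}^m(0) = Q_{p,\,x+y}^m(0)\,\eb + Q_{p,\,x-y}^m(0)\,\ebc \qquad (m \geq 1),
\]
so the two component sequences are orbits of the \emph{real} base points $x+y$ and $x-y$. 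Next I would note that, from the norm \eqref{eq2.15}, a hyperbolic number $w = a\eb + b\ebc$ satisfies $\Vert w \Vert^2 = \tfrac{1}{2}(a^2+b^2)$; hence a sequence in $\mD$ is bounded if and only if both of its real component sequences are bounded. Combining these two observations gives $c = x + y\bj \in \mHyb^p$ if and only if $x+y \in \mManp \cap \mR$ and $x-y \in \mManp \cap \mR$.

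From here Theorem \ref{t2.3.1} makes this equivalent to $\alpha \leq x+y \leq \beta$ and $\alpha \leq x-y \leq \beta$, where $\alpha := -2^{1/(p-1)}$ and $\beta := (p-1)/p^{p/(p-1)}$. Since $(x,y) \mapsto (x+y,\,x-y)$ is a linear bijection carrying the diamond $|x - t| + |y| \leq R$ onto the square $[t-R,\,t+R]\times[t-R,\,t+R]$ in the coordinates $(x+y,\,x-y)$, this system is precisely $|x - \tfrac{\alpha+\beta}{2}| + |y| \leq \tfrac{\beta-\alpha}{2}$; one then sets $t_p := \tfrac{\alpha+\beta}{2}$ and $l_p := \beta - \alpha$ and simplifies using the identity $2^{1/(p-1)} = p\,(2p)^{1/(p-1)}/p^{p/(p-1)}$ (which follows from $p^{p/(p-1)} = p\cdot p^{1/(p-1)}$) to bring $t_p$ and $l_p$ into the closed forms in the statement. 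The only genuine obstacle is the first step — legitimising the term-by-term splitting of the orbit and the coordinatewise criterion for boundedness — but this is exactly what the idempotent machinery recalled in Section \ref{Premilinaries} (and used in \cite{RochonParise}) provides; everything after that is routine algebra together with the normalisation of the constants $t_p$ and $l_p$.
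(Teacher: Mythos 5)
Your proposal is correct and follows essentially the same route as the paper: split the hyperbolic orbit into the two real orbits at $x+y$ and $x-y$, apply Theorem \ref{t2.3.1} to each, and convert the resulting box in the coordinates $(x+y,\,x-y)$ into the diamond $|x-t_p|+|y|\leq l_p/2$. The only difference is that the paper outsources the orbit-splitting and the coordinatewise boundedness criterion to Lemma 7 of \cite{RochonParise}, whereas you derive them directly from the idempotent decomposition --- which is precisely what that lemma rests on --- and your verification of the constants $t_p$ and $l_p$ checks out.
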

	\begin{proof}
	Using the notations and Lemma 7 in \cite{RochonParise}, and the remark right after, $\oa \bHpc^m (\mathbf{0}) \fa _{m=1}^{\infty}$ is bounded if and only if the real sequences $\oa Q_{p,x-y}^m(0) \fa _{m=1}^{\infty}$ and $\oa Q_{p,x+y}^m(0) \fa _{m=1}^{\infty}$ are bounded. However, according to Theorem \ref{t2.3.1}, these sequences are bounded if and only if
		\begin{align*}
		-2^{1/(p-1)}\leq x-y \leq \frac{p-1}{p^{p/(p-1)}}
		\quad \text{ and } \quad
		 -2^{1/(p-1)} \leq x+y\leq \frac{p-1}{p^{p/(p-1)}}\text{.}
		\end{align*}
	Then, subtracting $t_p$ from both sides gives the following inequalities
	\begin{align}
	-\frac{l_p}{2} \leq x - t_p - y \leq \frac{l_p}{2}
	\quad \text{ and } \quad
	-\frac{l_p}{2} \leq x - t_p + y \leq \frac{l_p}{2}. \label{HypInequalities}
	\end{align}
	Moreover, inequalities \eqref{HypInequalities} are equivalent to
	\begin{align}
	|x - t_p| + |y| \leq \frac{l_p}{2} \text{.} \label{inequalityOfHyperbrot}
	\end{align}
	Thus, $c = x + y\bj \in \mHyb^p$ if and only if \eqref{inequalityOfHyperbrot} holds. \hfill $\qed$
	\end{proof}
	
Figure \ref{figureHyperbrots} represent faithfully Theorem \ref{t3.2}. We remark that each square is centered at the point $(t_p, 0)$. Moreover, we note that the squares seem to have a limit set as the even integer $p$ tends to infinity.
	
\begin{figure}
\centering
\subfigure[$\mHyb^2$, $\lc -2, 1 \rc \times \lc \frac{-3}{2}, \frac{3}{2} \rc$]{
	\includegraphics[scale = 0.15]{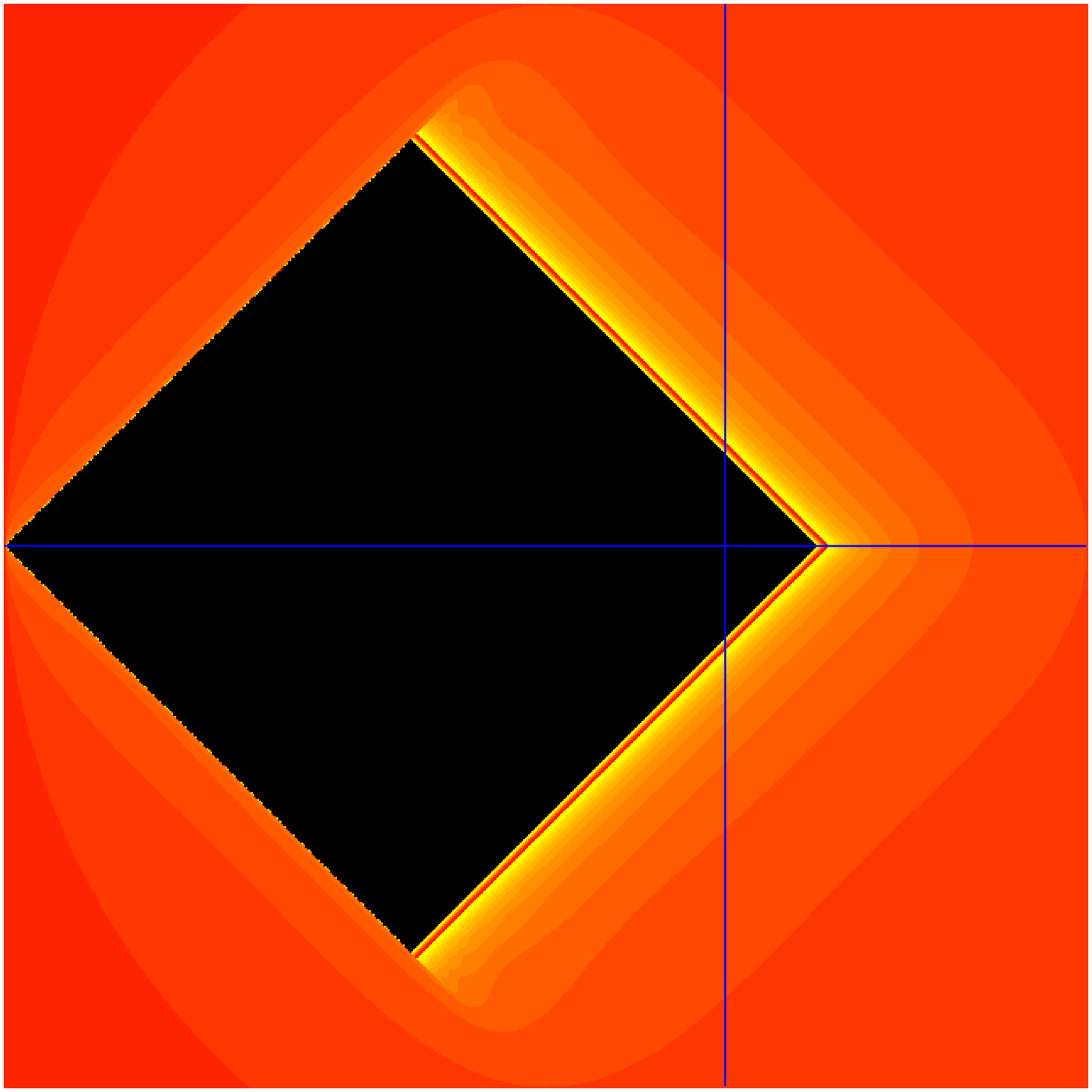}}
\subfigure[$\mHyb^4$, $\lc -2, 1 \rc \times \lc \frac{-3}{2}, \frac{3}{2} \rc$]{
	\includegraphics[scale = 0.15]{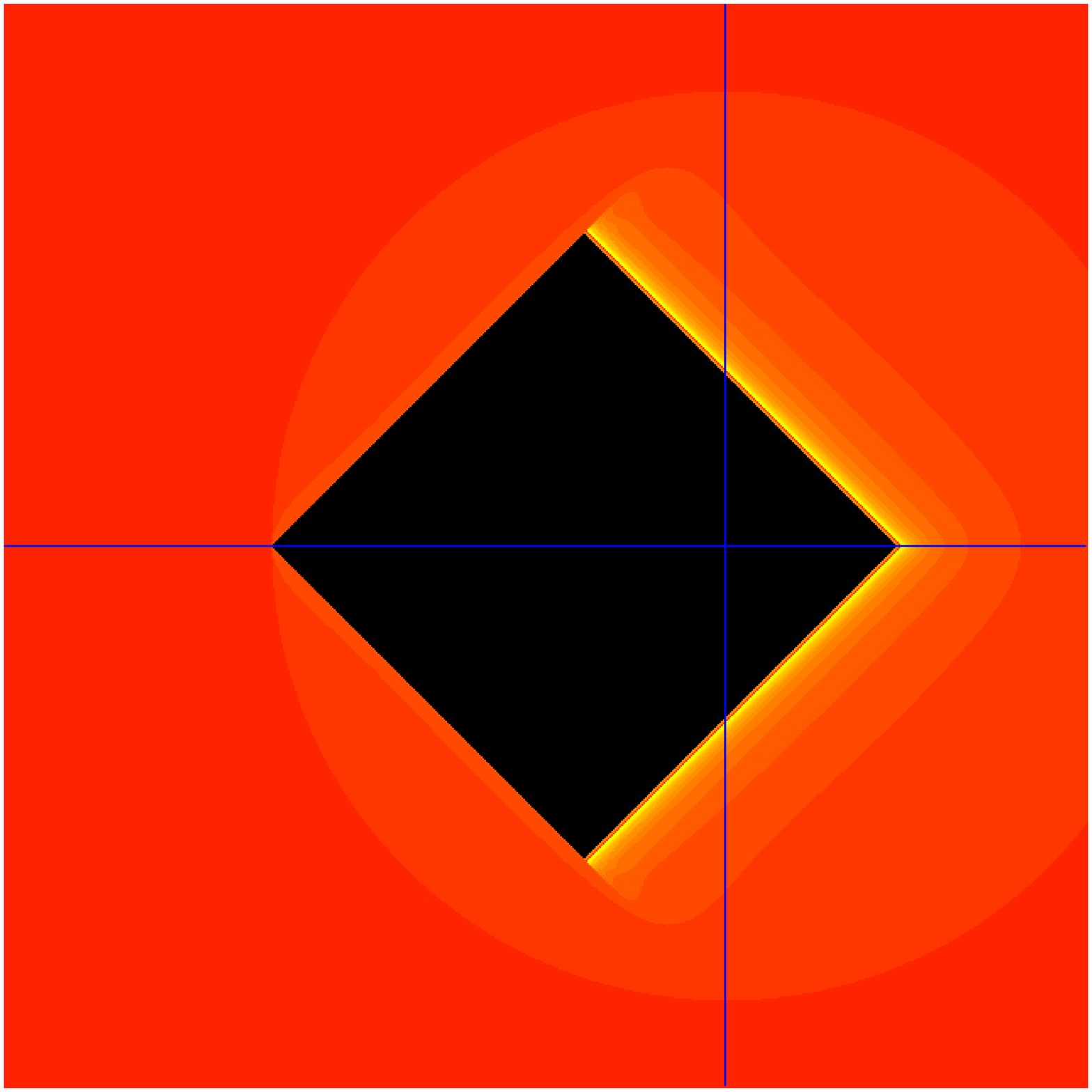}}
\subfigure[$\mHyb^8$, $\lc -2, 1 \rc \times \lc \frac{-3}{2}, \frac{3}{2} \rc$]{
	\includegraphics[scale = 0.15]{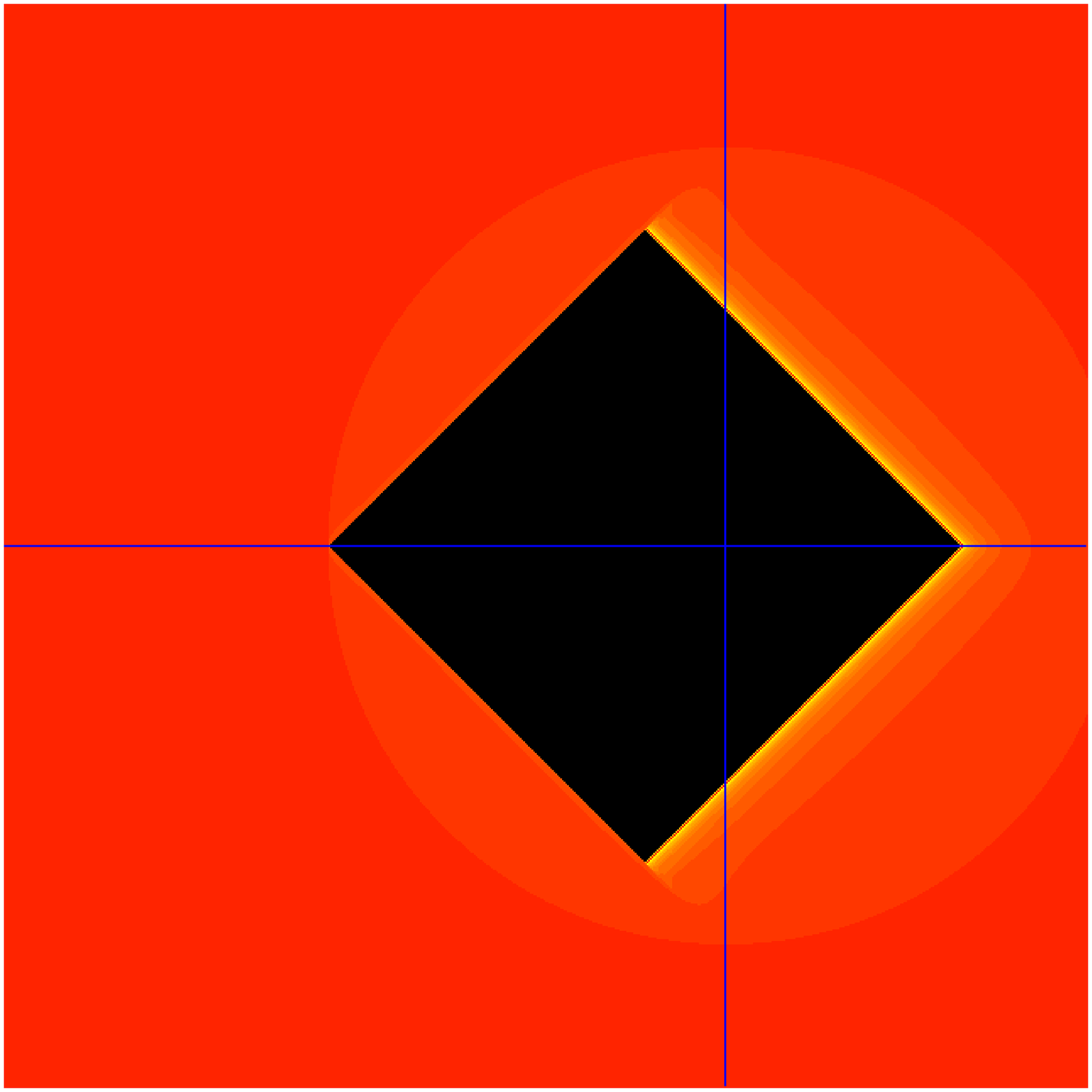}}
\subfigure[$\mHyb^{14}$, $\lc -2, 1 \rc \times \lc \frac{-3}{2}, \frac{3}{2} \rc$]{
	\includegraphics[scale = 0.15]{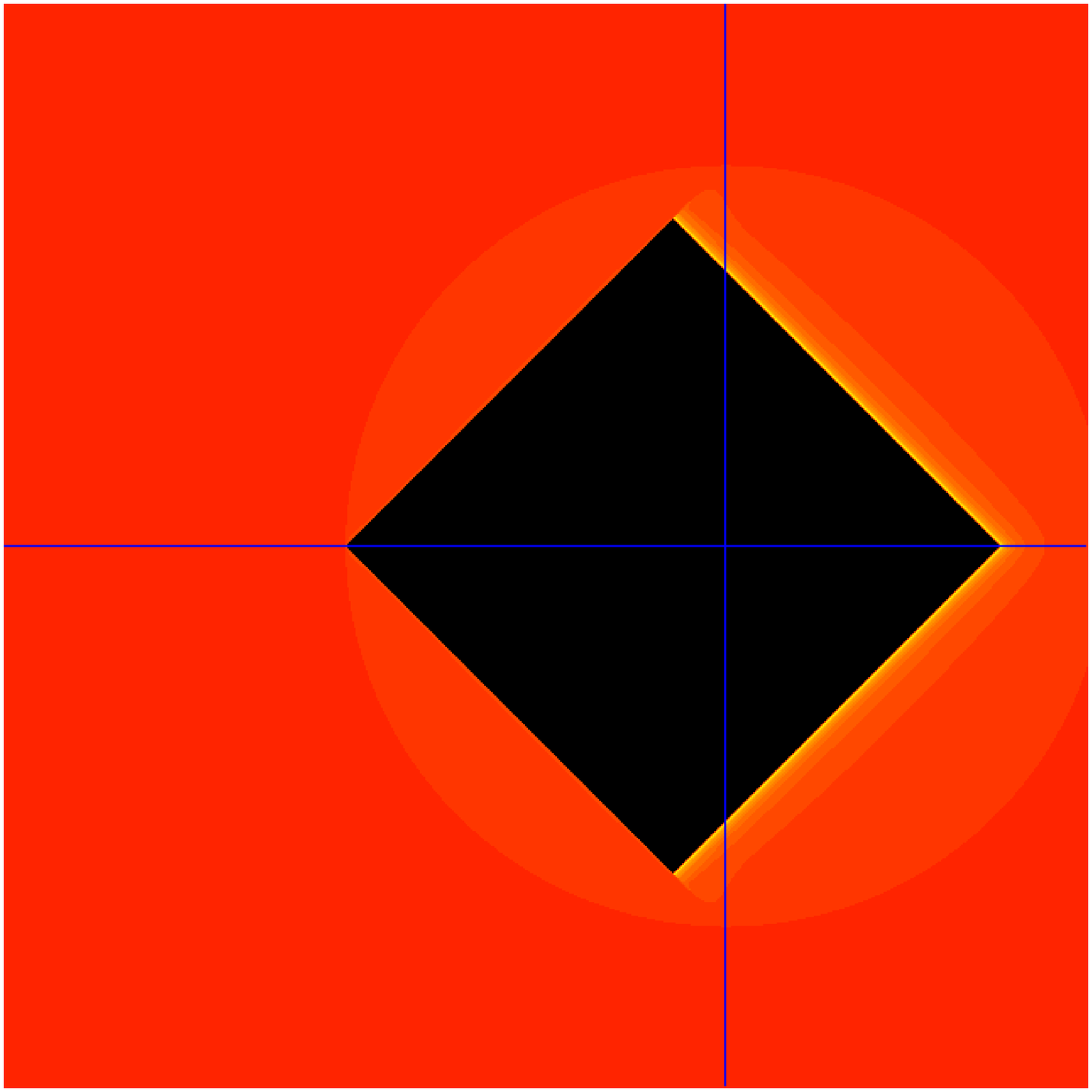}}
\subfigure[$\mHyb^{20}$, $\lc -2, 1 \rc \times \lc \frac{-3}{2}, \frac{3}{2} \rc$]{
	\includegraphics[scale = 0.15]{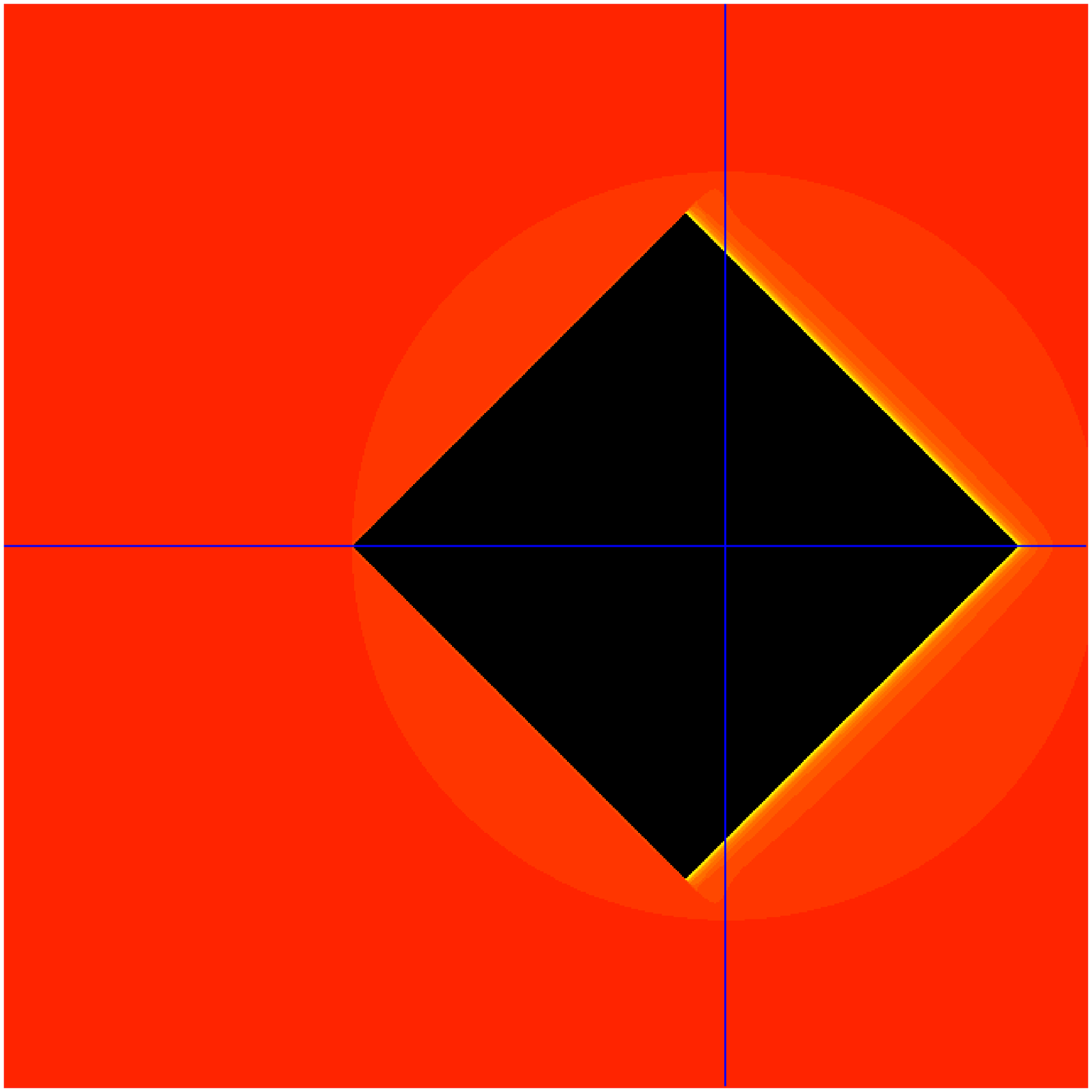}}
\subfigure[$\mHyb^{30}$, $\lc -2, 1 \rc \times \lc \frac{-3}{2}, \frac{3}{2} \rc$]{
	\includegraphics[scale = 0.15]{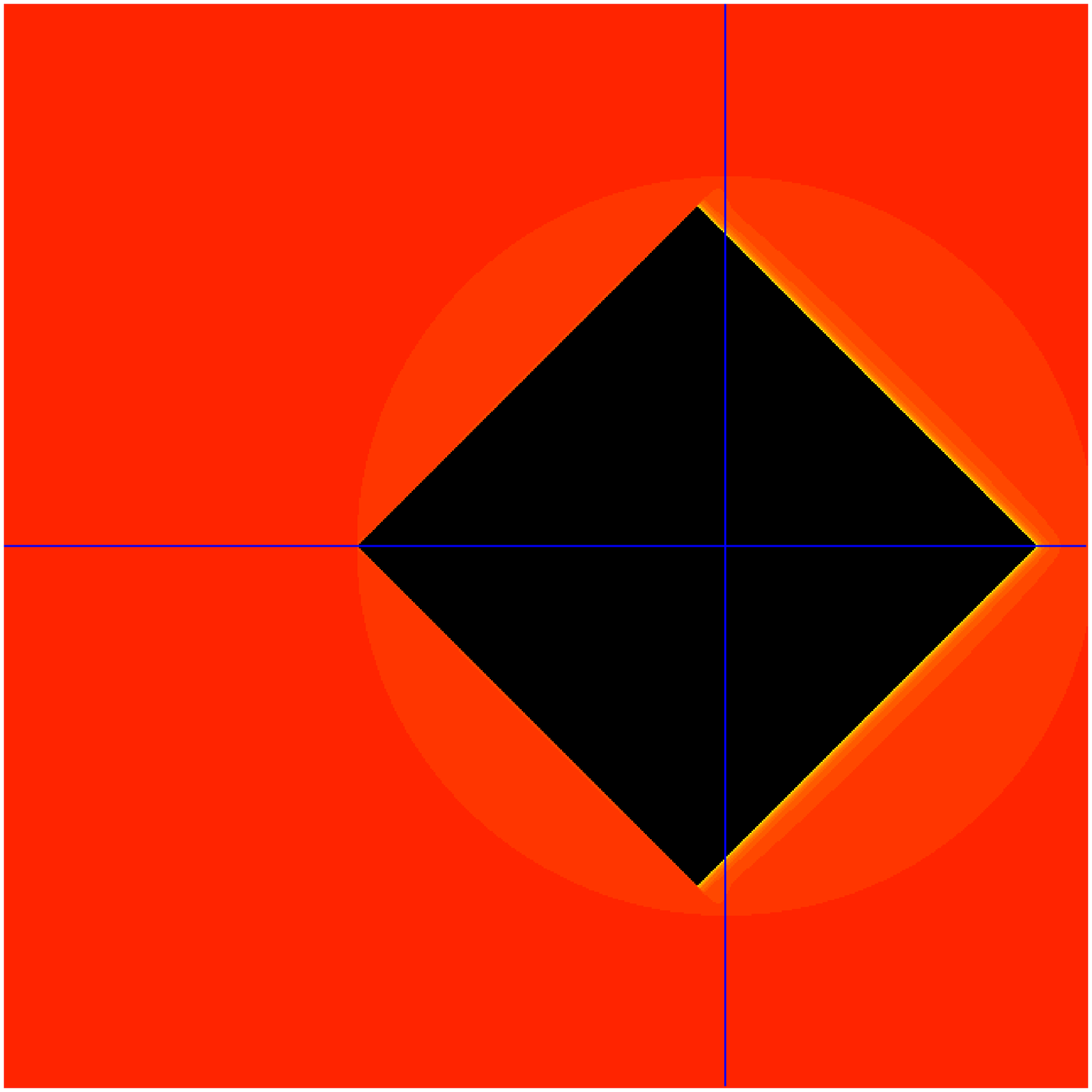}}
\caption{Hyperbrots for several even integers. The black color represents the Hyperbrots.}\label{figureHyperbrots}
\end{figure}

Let $\mathcal{S}(\mR^2)$ be the collection of non-empty compact subsets of $\mR^2$ and define the distance between $A$ and $B$ in $\mathcal{S}(\mR^2)$ as
	\begin{align*}
	d(A,B)&:= \max_{x \in A} \oa d(x,B) \fa \\
	&= \max_{x \in A} \oa \min_{y \in B} \oa \Vert x - y \Vert \fa \fa
	\end{align*}
where $\Vert \cdot \Vert$ is the Euclidean distance on $\mR^2$. Now, let $(\mathcal{S}(\mR^2), h)$ be the so-called \textbf{Fractals metric space} on $\mR^2$ where $\mathcal{S}(\mR^2)$ is the collection of all non-empty compact subsets of $\mR^2$ and $h: \, \mathcal{S}(\mR^2) \times \mathcal{S}(\mR^2) \ra [0,+\infty )$ is the Hausdorff distance on the collection $\mathcal{S}(\mR^2)$ defined as $h(A,B) := \max \oa d(A,B) , d(B,A) \fa$. With these definitions from \cite{Barnsley}, we have the following theorem.

	\begin{theorem}\label{HausdorffH2D}
	 Let $\mHyb := \oa x+y\bj\in\mD \, : \, |x| + |y| \leq 1\fa.$ Then
		\begin{align*}
		\lim_{n \ra \infty} h(\mHyb , \mHyb^{2n}) = 0 \text{.}
		\end{align*}
	\end{theorem}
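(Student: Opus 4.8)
The plan is to show that $\mHyb^{2n} \to \mHyb$ in the Hausdorff metric by using the explicit description of $\mHyb^p$ from Theorem \ref{t3.2} as the square (an $\ell^1$-ball) centered at $(t_p,0)$ with ``radius'' $l_p/2$. Concretely, write $p = 2n$ and examine the asymptotic behaviour of the two parameters $t_p$ and $l_p$ as $p \to \infty$. Since $p^{1/(p-1)} \to 1$ and $(2p)^{1/(p-1)} \to 1$ as $p \to \infty$, one computes $t_p \to -\tfrac{1}{2}(1\cdot[1-1]+1)/1 = -\tfrac{1}{2}$ and $l_p \to (1\cdot[1+1]-1)/1 = 1$. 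Hence $\mHyb^{2n}$ converges, parameter-wise, to the $\ell^1$-ball $\{x+y\bj : |x+\tfrac12| + |y| \le \tfrac12\}$. Wait — that is centered at $-\tfrac12$ with radius $\tfrac12$, whereas the target $\mHyb$ is centered at $0$ with radius $1$; so before writing anything I must recheck the limits of $t_p$ and $l_p$ against the claimed limit set $\{|x|+|y|\le 1\}$, and if there is a discrepancy either I have mis-evaluated a limit or the statement intends a different normalization. Assuming the paper's statement is correct, the computation must actually give $t_p \to 0$ and $l_p \to 2$; I would redo it carefully, being attentive to the fact that $p^{p/(p-1)} = p \cdot p^{1/(p-1)} \to \infty$, not $1$, so that $t_p$ and $l_p$ are ratios of quantities both blowing up like $p$, and the true limits come from the leading-order cancellation. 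The key step is therefore an honest asymptotic expansion: using $p^{1/(p-1)} = \exp\!\big(\tfrac{\ln p}{p-1}\big) = 1 + \tfrac{\ln p}{p} + o\!\big(\tfrac{\ln p}{p}\big)$ and similarly for $(2p)^{1/(p-1)}$, so that $p[(2p)^{1/(p-1)} - 1] \to \infty$ but divided by $p^{p/(p-1)} \sim p$ tends to a finite limit, and likewise for the other pieces.

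Once the limits $t_\infty := \lim_p t_p$ and $l_\infty := \lim_p l_p$ are pinned down and shown to equal the center and twice the radius of $\mHyb$, the convergence in Hausdorff distance is essentially a stability statement for $\ell^1$-balls under perturbation of center and radius. I would record the elementary lemma: if $B_r(q)$ denotes the $\ell^1$-ball $\{v \in \mR^2 : \|v - q\|_1 \le r\}$ with $r > 0$, then $h(B_{r}(q), B_{r'}(q')) \le \|q - q'\|_1 + |r - r'|$ (or the Euclidean analogue with a harmless constant factor, since $h$ in the statement is built from the Euclidean distance). This follows because every point of $B_r(q)$ lies within $\ell^1$-distance $\|q-q'\|_1 + |r-r'|$ of a suitably scaled-and-translated point of $B_{r'}(q')$, and vice versa. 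Applying this with $q = (t_p,0)$, $r = l_p/2$, $q' = (t_\infty,0) = (0,0)$, $r' = l_\infty/2 = 1$ gives
\begin{align*}
h(\mHyb, \mHyb^{2n}) \;\le\; C\Big(|t_{2n}| + \Big|\tfrac{l_{2n}}{2} - 1\Big|\Big) \;\longrightarrow\; 0
\end{align*}
as $n \to \infty$, for an absolute constant $C$ relating the $\ell^1$ and Euclidean metrics.

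The main obstacle is simply the asymptotic bookkeeping in evaluating $\lim_p t_p$ and $\lim_p l_p$ correctly — the expressions are ratios where numerator and denominator both diverge, so one cannot just substitute the naive limits $p^{1/(p-1)} \to 1$; one must expand to the order at which the divergent parts cancel, and confirm the result matches the center and radius encoded in $\mHyb$. Everything after that — the perturbation lemma for $\ell^1$-balls and the final limit — is routine. I would structure the proof as: (i) state and prove the $\ell^1$-ball perturbation lemma; (ii) carry out the asymptotics of $t_p$ and $l_p$, establishing $t_p \to 0$ and $l_p \to 2$; (iii) combine via Theorem \ref{t3.2} and the lemma to conclude. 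One should also note in passing that each $\mHyb^{2n}$ and $\mHyb$ are non-empty compact subsets of $\mR^2$ (identifying $\mD$ with $\mR^2$ via $x+y\bj \leftrightarrow (x,y)$), so that $h$ is genuinely defined on them, which is immediate from the closed-inequality description in Theorem \ref{t3.2}.
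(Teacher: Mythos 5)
Your proposal is correct, and it rests on the same two pillars as the paper's own proof: the description of $\mHyb^{p}$ from Theorem \ref{t3.2} as the $\ell^1$-ball of center $(t_p,0)$ and radius $l_p/2$, and the limits $t_p \to 0$ and $l_p \to 2$ as $p \to \infty$. You are right that the latter requires a genuine asymptotic expansion rather than naive substitution, since $p^{p/(p-1)} = p\cdot p^{1/(p-1)} \sim p$ diverges and the finite limits arise from cancellation of the leading terms; your self-correction on this point lands on the correct values, which are exactly the ones the paper asserts (as ``a simple computation''). Where you genuinely differ is in how the Hausdorff distance is controlled. The paper computes the two directed distances exactly, arguing geometrically that $d(\mHyb,\mHyb^p)$ and $d(\mHyb^p,\mHyb)$ are attained at the right and left corners of the two parallel squares, so that $h(\mHyb,\mHyb^p)=\max\{|t_p+\tfrac{l_p}{2}-1|,\,|t_p-\tfrac{l_p}{2}+1|\}$. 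You instead invoke a general stability estimate for norm balls, $h(B_r(q),B_{r'}(q'))\le \|q-q'\|+|r-r'|$, applied with $q=(t_p,0)$, $r=l_p/2$, $q'=(0,0)$, $r'=1$ (and the comparison $\|\cdot\|_2\le\|\cdot\|_1$ on $\mR^2$ makes the constant $C=1$). Your route buys a cleaner and more robust argument that sidesteps the corner-location analysis, which the paper handles somewhat informally (``it is easy to see'', ``it is also obvious''), at the cost of obtaining only an upper bound for $h(\mHyb,\mHyb^p)$ rather than its exact value --- but an upper bound tending to $0$ is all the theorem needs. Both arguments then conclude identically from $t_p\to 0$ and $l_p/2\to 1$.
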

	
	\begin{proof}
	Let $p \geq 2$ be an even integer. We start by computing $d(\mHyb , \mHyb^p)$. Let $c:=a+b\bj \in \mHyb$. We now have to find $d(c, \mHyb^p)$. If $c \in \mHyb^p$, then we have $d(c, \mHyb^p) = d(c,c) = 0$. If $c \not \in \mHyb^p$, then it is easy to see that the minimum occurs on the boundary of $\mHyb^p$. It is also obvious that the maximum of $d(c, \mHyb^p)$ is attained on the boundary of $\mHyb$. Since the sides of $\mHyb$ and $\mHyb^p$ are parallel, and $t_p+\frac{l_2}{2}=\frac{p-1}{p^{p/(p-1)}}<1$, we have that $d(c, \mHyb^p)$ attains its maximum at the right corner of $\mHyb$ and $\mHyb^p$, that is
		\begin{align*}
		d(\mHyb, \mHyb^p) = |(t_p+\frac{l_p}{2})-1|. 
		\end{align*}
	Similarly, we compute
		\begin{align*}
		d(\mHyb^p , \mHyb) = |(t_p-\frac{l_p}{2})+1|. 
		\end{align*}
	Then,
		\begin{align*}
		h(\mHyb , \mHyb^p) = \max \oa |(\frac{l_p}{2}+t_p)-1|, |(t_p-\frac{l_p}{2})+1|\fa
		\end{align*}
	for any even integer $p \geq 2$.	A simple computation shows that
		\begin{align*}
		\lim_{p \ra \infty} t_p = 0 \text{, } \quad \quad \lim_{p \ra \infty} 1-\frac{l_p}{2} = 0.		
		\end{align*}
	Thus, 
		\begin{align*}
		\lim_{n \ra \infty} h(\mHyb , \mHyb^{2n}) = 0 \text{.} 
		\end{align*}
	This completes the proof. \hfill $\qed$
	\end{proof}
	
	\subsection{Characterization of the generalized Perplexbrot}
	In this subsection, we generalize Hyperbrots in three dimensions. 
	
	We begin by recalling the definition of the Multibrot sets for tricomplex numbers (see \cite{RochonParise} and \cite{PariseRochon2}).
	\begin{definition}\label{d3.1}
	Let $Q_{p,c}(\eta )=\eta^p+c$ where $\eta , c \in \mM (3)$ and $p\geq 2$ an integer. The tricomplex Multibrot set of order $p$ is defined as the set
		\begin{equation}
		\mMan_3^p:=\oa c \in \mM (3) \, : \, \Qpit \text{ is bounded } \fa \text{.}
		\end{equation}
	\end{definition}
Moreover, a \textbf{principal 3D slice} of a $\mMan_3^p$ set is defined as follows
	\begin{equation}\label{eq5.2.1}
	 \mTet^p(\bim , \bk , \bil ) :=\oa c \in \mT (\bim , \bk , \bil ) \, : \, \oa Q_{p,c}^m(0) \fa_{m=1}^{\infty} \text{ is bounded } \fa
\text{.}
	\end{equation}
Let us adopt the same notation as in \cite{GarantRochon}, \cite{RochonParise} and \cite{PariseRochon2} for the generalized Hyperbrot, called the \textit{Perplexbrot},
	\begin{align}
	\mathcal{P}^p:=\mathcal{T}^p(1,\bjp,\bjd)
=\oa c=c_0+c_5\bjp+c_6\bjd \, : \, c_i \in \mathbb{R} \text{ and } \right. \notag\\
\left. \oa Q_{p,c}^m(0)\fa_{m=1}^{\infty} \text{ is bounded} \fa \text{.}\label{eq3.3.1}
	\end{align}
In \cite{PariseRochon2}, it is proved that $\cP^p$ is a regular octahedron for all odd integers $p > 2$. We want to extend the result to all even integers $p \geq 2$.

First, we need this following lemma.

	\begin{lemma}\label{lem3.3.1}
	We have the following characterization of the generalized Perplexbrot
		\begin{equation*}
		\mathcal{P}^p=\bigcup_{y\in \left[-\frac{l_p}{2},\frac{l_p}{2}\right]} \oa \left[ (\mathcal{H}^p-y\bjp)\cap (\mathcal{H}^p+y\bjp)\right] + y\bjd\fa
		\end{equation*}
	where $\mathcal{H}^p$ is the Hyperbrot for an even integer $p \geq 2$.
	\end{lemma}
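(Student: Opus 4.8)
The plan is to reduce everything to the idempotent decomposition of tricomplex numbers with respect to a suitable pair of idempotents, exactly as the Hyperbrot characterization (Theorem~\ref{t3.2}) was obtained from the complex result (Theorem~\ref{t2.3.1}). A point $c = c_0 + c_5\bjp + c_6\bjd$ lies in $\cP^p = \mTet^p(1,\bjp,\bjd)$ if and only if $\{Q_{p,c}^m(0)\}_{m=1}^\infty$ is bounded in $\mM(3)$. By Theorem~\ref{theo2.2}, iteration of $Q_{p,c}$ respects the idempotent splitting, so one can decompose $c$ along the idempotents $\gamma_1 = \frac{1+\bjt}{2}$, $\gamma_2 = \frac{1-\bjt}{2}$ to write $c = u_1\gamma_1 + u_2\gamma_2$ with $u_1, u_2$ in the appropriate copy of $\mM(2)$, and boundedness of the tricomplex orbit is equivalent to boundedness of the two orbits $\{Q_{p,u_1}^m(0)\}$ and $\{Q_{p,u_2}^m(0)\}$ separately. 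Since $\bjd = \bjp\cdot\bjt$ in the multiplication table, a number of the form $c_0 + c_5\bjp + c_6\bjd$ has idempotent components $u_1 = (c_0 + c_5\bjp) + c_6\bjp(\text{sign on }\gamma_1)$ and $u_2 = (c_0+c_5\bjp) - c_6\bjp$, i.e. after tracking the signs carefully, $u_{1,2} = (c_0 \mp c_6) + c_5\bjp$ — each a \emph{hyperbolic} number in the variable generated by $\bjp$ (recall $\bjps = 1$).

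So the concrete steps are: (1) compute the idempotent components of $c_0 + c_5\bjp + c_6\bjd$ with respect to $\gamma_1,\gamma_2$ and verify they are hyperbolic numbers of the form $(c_0 \pm c_6) + c_5\bjp$; this is where I must be careful with the sign bookkeeping from Table~\ref{tabC1}, and it is the one genuinely delicate point. (2) Conclude from Theorem~\ref{theo2.2} that $c \in \cP^p$ iff $(c_0 - c_6) + c_5\bjp \in \mHyb^p$ and $(c_0 + c_6) + c_5\bjp \in \mHyb^p$, where $\mHyb^p$ is read off in the $\bjp$-hyperbolic plane (legitimate because the defining dynamics of $\mHyb^p$ only uses that the unit squares to $+1$). (3) Set $y := c_6$; then the condition ``$(c_0 - y) + c_5\bjp$ and $(c_0 + y) + c_5\bjp$ both lie in $\mHyb^p$'' says precisely that $c_0 + c_5\bjp$ lies in the intersection of the two translated squares $(\mHyb^p + y\bjp) \cap (\mHyb^p - y\bjp)$; adding back the $\bjd$-component, the slice of $\cP^p$ at height $c_6 = y$ is $\big[(\mHyb^p - y\bjp)\cap(\mHyb^p + y\bjp)\big] + y\bjd$. (4) Determine the range of $y$ for which this intersection is non-empty: from Theorem~\ref{t3.2}, $\mHyb^p$ is the square $|x - t_p| + |v| \le l_p/2$ (here $x = c_0$, $v = c_5$), the translates $\mHyb^p \pm y\bjp$ shift this square by $\pm y$ in the $c_5$-direction, and two such squares meet iff $|2y| \le l_p$, i.e. $y \in [-l_p/2, l_p/2]$. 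Taking the union over that interval gives the claimed formula.

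The main obstacle I expect is Step~(1): pinning down the exact idempotent decomposition of $c_0 + c_5\bjp + c_6\bjd$ and being sure the two components are honest hyperbolic numbers in the $\bjp$-plane (as opposed to, say, components with an unexpected $\bjd$ or $\bb$ piece). Once that is correct, the rest is formal: Theorem~\ref{theo2.2} does the decoupling, Theorem~\ref{t3.2} describes the two square conditions, and the ``intersection of two shifted squares, swept over the shift parameter'' picture is immediate. I would present it as: decompose, apply the two theorems, reinterpret the conjunction as the stated fibered union, and remark that the fibers are empty precisely outside $[-l_p/2, l_p/2]$ so that the union may be restricted to that interval without loss.
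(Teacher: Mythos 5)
Your overall architecture is the right one, and it is in substance the argument the paper relies on (the paper itself only says ``similar to the proof in \cite{PariseRochon2}, with $m_p$ replaced by $l_p$'', and that proof is exactly the idempotent-decoupling you describe). But the one step you yourself flagged as delicate --- Step (1) --- is where you go wrong, and the error propagates as an internal inconsistency. Writing $c=c_0+c_5\bjp+c_6\bjd=\zeta_1+\zeta_2\bb$ with $\zeta_1=c_0+c_5\bo\bt=c_0+c_5\bjp$ and $\zeta_2=c_6\bo$, the idempotent representation \eqref{eq2.7} gives $u_{1,2}=\zeta_1\mp\zeta_2\bt=(c_0+c_5\bjp)\mp c_6\bjp=c_0+(c_5\mp c_6)\bjp$. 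You actually write this correct intermediate form, but then ``simplify'' it to $(c_0\mp c_6)+c_5\bjp$, which is a different number: adding $c_6\bjp$ changes the $\bjp$-coefficient, not the real part. With your (incorrect) components, the condition in Step (2) would read $c_0+c_5\bjp\in(\mHyb^p+y)\cap(\mHyb^p-y)$, a translate in the \emph{real} direction, and your Step (3) claim that this ``says precisely'' $c_0+c_5\bjp\in(\mHyb^p+y\bjp)\cap(\mHyb^p-y\bjp)$ is a non sequitur. The two sets happen to coincide because the square $|x-t_p|+|v|\le l_p/2$ is invariant under exchanging $|x-t_p|$ and $|v|$, but that is a coincidence you neither invoke nor prove.

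The fix is simply to keep the correct decomposition $u_{1,2}=c_0+(c_5\mp c_6)\bjp$: then $u_1\in\mHyb^p$ iff $c_0+c_5\bjp\in\mHyb^p+c_6\bjp$ and $u_2\in\mHyb^p$ iff $c_0+c_5\bjp\in\mHyb^p-c_6\bjp$, which with $y:=c_6$ is exactly the fibered-union formula of the lemma, and your Step (4) (the fibre is non-empty iff $|y|\le l_p/2$, since the two diamonds translated by $\pm y$ in the $\bjp$-direction meet iff $|y|\le l_p/2$) then closes the argument. Everything else --- the appeal to Theorem \ref{theo2.2} for decoupling the orbit, the observation that the components live in the hyperbolic plane spanned by $1$ and $\bjp$ so that Theorem \ref{t3.2} applies verbatim, and the restriction of the union to $[-l_p/2,l_p/2]$ --- is sound.
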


	\begin{proof}
	The proof is similar to the one in \cite{PariseRochon2}. We just replace $m_p$ by $l_p$ in the proof. \hfill $\qed$
	\end{proof}

As a consequence of Lemma \ref{lem3.3.1} and Theorem \ref{t3.2}, we have the following corollary illustrated by Figure \ref{figureHyperbrots2}:

	\begin{corollary}\label{theo3.2.5}
	$\mathcal{P}^p$ is a regular octahedron of edges $\frac{\sqrt{2}}{2}l_p$ where $p\geq2$ is an even integer. Moreover, the generalized Perplexbrot can be rewritten as the set
		\begin{equation}
		\mathcal{P}^p = \oa x + y\bjp + z \bjd \, : \, (x,y,z) \in \mR^3 \, \text{ and } \, |x - t_p| + |y| + |z| \leq \frac{l_p}{2} \fa.
		\end{equation}
	\end{corollary}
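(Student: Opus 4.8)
The plan is to take the union description of $\cP^p$ from Lemma \ref{lem3.3.1} and simplify each slice using the explicit square characterization of $\mHyb^p$ from Theorem \ref{t3.2}, then recognize the resulting solid as a regular octahedron. First I would fix $y \in [-l_p/2, l_p/2]$ and compute the planar set $(\mHyb^p - y\bjp) \cap (\mHyb^p + y\bjp)$. By Theorem \ref{t3.2}, a point $x + w\bj$ lies in $\mHyb^p$ iff $|x - t_p| + |w| \le l_p/2$; translating by $\pm y$ in the $\bjp$-direction (which plays the role of the real part in this planar slice, with $\bj \leftrightarrow \bjd$) means the intersection is the set of $(x,w)$ with $|x - t_p - y| + |w| \le l_p/2$ \emph{and} $|x - t_p + y| + |w| \le l_p/2$. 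Since $\max\{|u-y|,|u+y|\} = |u| + |y|$ for real $u$ (taking $u = x - t_p$), this pair of inequalities collapses to the single inequality $|x - t_p| + |y| + |w| \le l_p/2$.

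Next I would add back the $y\bjd$ term, so the $y$-slice of $\cP^p$ consists of the points $x + w\bjp + y\bjd$ with $|x - t_p| + |w| + |y| \le l_p/2$. Taking the union over $y \in [-l_p/2, l_p/2]$ — noting the constraint already forces $|y| \le l_p/2$, so the union is over exactly the right range — gives
\[
\cP^p = \oa x + w\bjp + y\bjd \, : \, |x - t_p| + |w| + |y| \le \tfrac{l_p}{2} \fa,
\]
which (renaming $w \to y$, $y \to z$) is precisely the second displayed claim of the corollary. Finally, to get the first claim I would observe that $\{(x,y,z) : |x| + |y| + |z| \le r\}$ is the standard description of a regular octahedron with vertices at $(\pm r, 0, 0)$, $(0, \pm r, 0)$, $(0,0,\pm r)$; its edge connects, e.g., $(r,0,0)$ to $(0,r,0)$ and hence has length $r\sqrt{2}$. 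With $r = l_p/2$ this gives edge length $\tfrac{\sqrt2}{2} l_p$, and the solid is just a translate (center moved to $(t_p,0,0)$) of this octahedron, which does not change its shape or edge length.

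The only genuine step requiring care is the reduction of the two translated absolute-value inequalities to a single one — the identity $\max\{|u-y|, |u+y|\} = |u| + |y|$ — and checking that the union over $y$ produces no spurious points and misses none (both handled by the observation that the final inequality itself bounds $|y|$). Everything else is bookkeeping: invoking Lemma \ref{lem3.3.1} and Theorem \ref{t3.2}, and recalling the standard coordinate description of the octahedron together with the distance formula for its edge length. I do not anticipate a real obstacle here, since Lemma \ref{lem3.3.1} already isolates the one nontrivial geometric input; the corollary is essentially an unpacking of that lemma.
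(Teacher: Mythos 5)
Your proposal is correct and follows exactly the route the paper intends: the paper states this corollary as an immediate consequence of Lemma \ref{lem3.3.1} and Theorem \ref{t3.2} without writing out the details, and you supply precisely those details (the collapse of the two translated inequalities via $\max\{|u-y|,|u+y|\}=|u|+|y|$ and the standard octahedron description). The only quibble is immaterial: whether the $\pm y\bjp$ translation shifts the real coordinate or the $\bjp$-coordinate of $\mHyb^p$, the intersection reduces to the same inequality $|x-t_p|+|y|+|z|\leq \frac{l_p}{2}$ by the symmetry of the taxicab ball.
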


Figure \ref{figureHyperbrots2} shows some Perplexbrots in 3-dimensional space. We remark that they are centered at the point $(t_p, 0, 0)$.
\begin{figure}
\centering
\subfigure[$\cP^2$, $\lc -2,2\rc^3$]{
	\includegraphics[scale = 0.25]{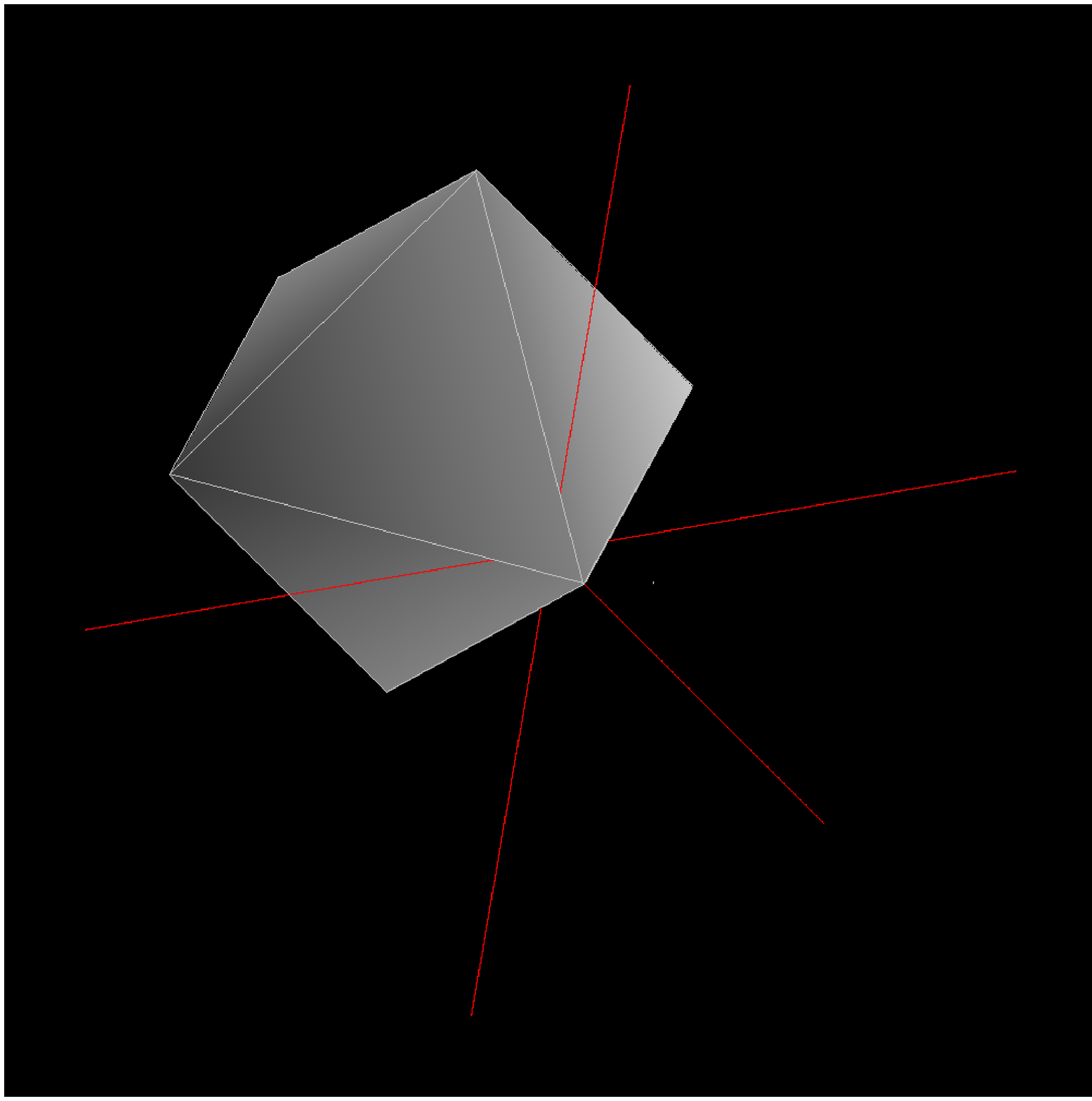}}
\subfigure[$\cP^{12}$, $\lc -2,2\rc^3$]{
	\includegraphics[scale = 0.25]{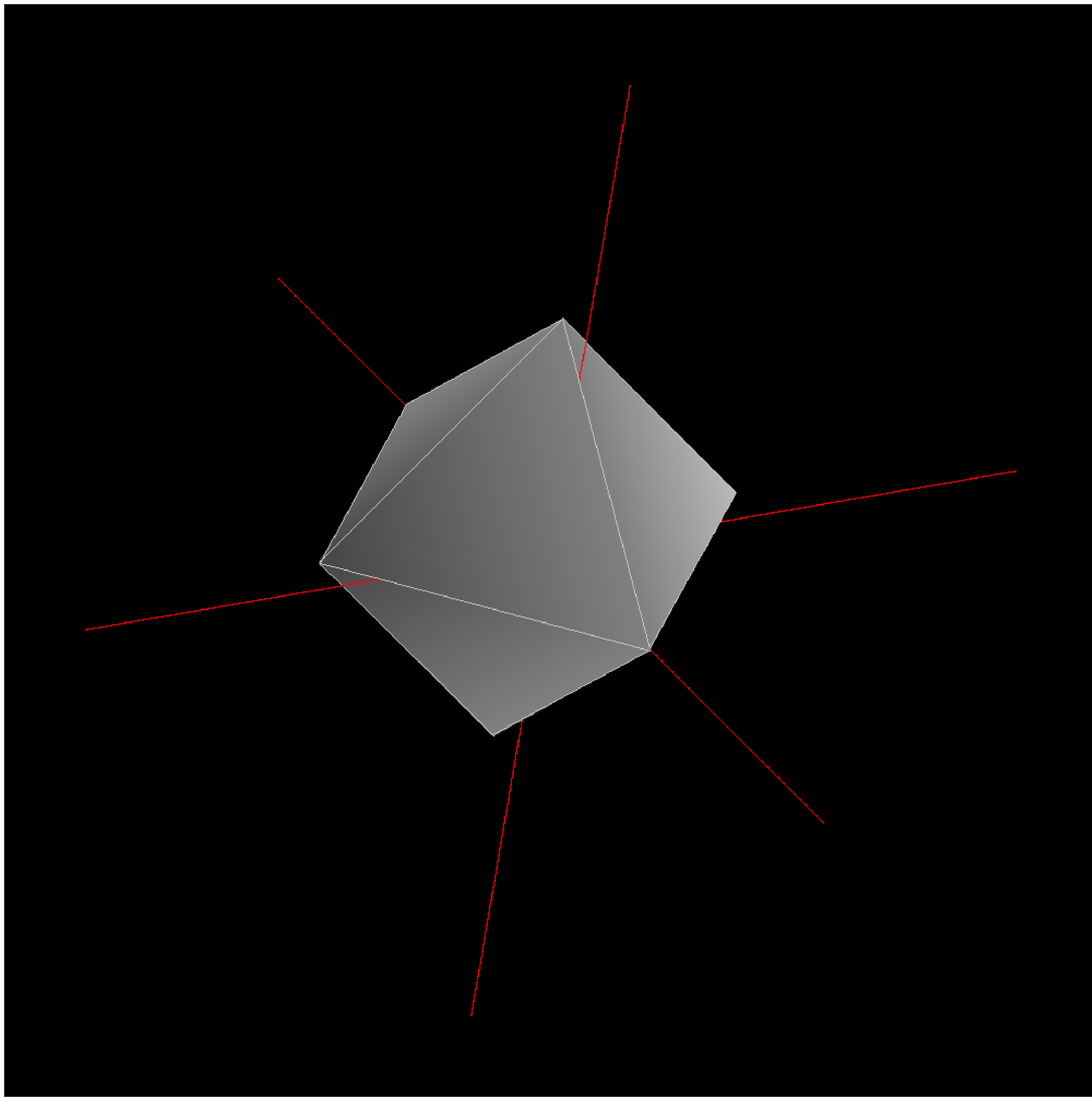}}
\subfigure[$\cP^{20}$, $\lc -2,2\rc^3$]{
	\includegraphics[scale = 0.25]{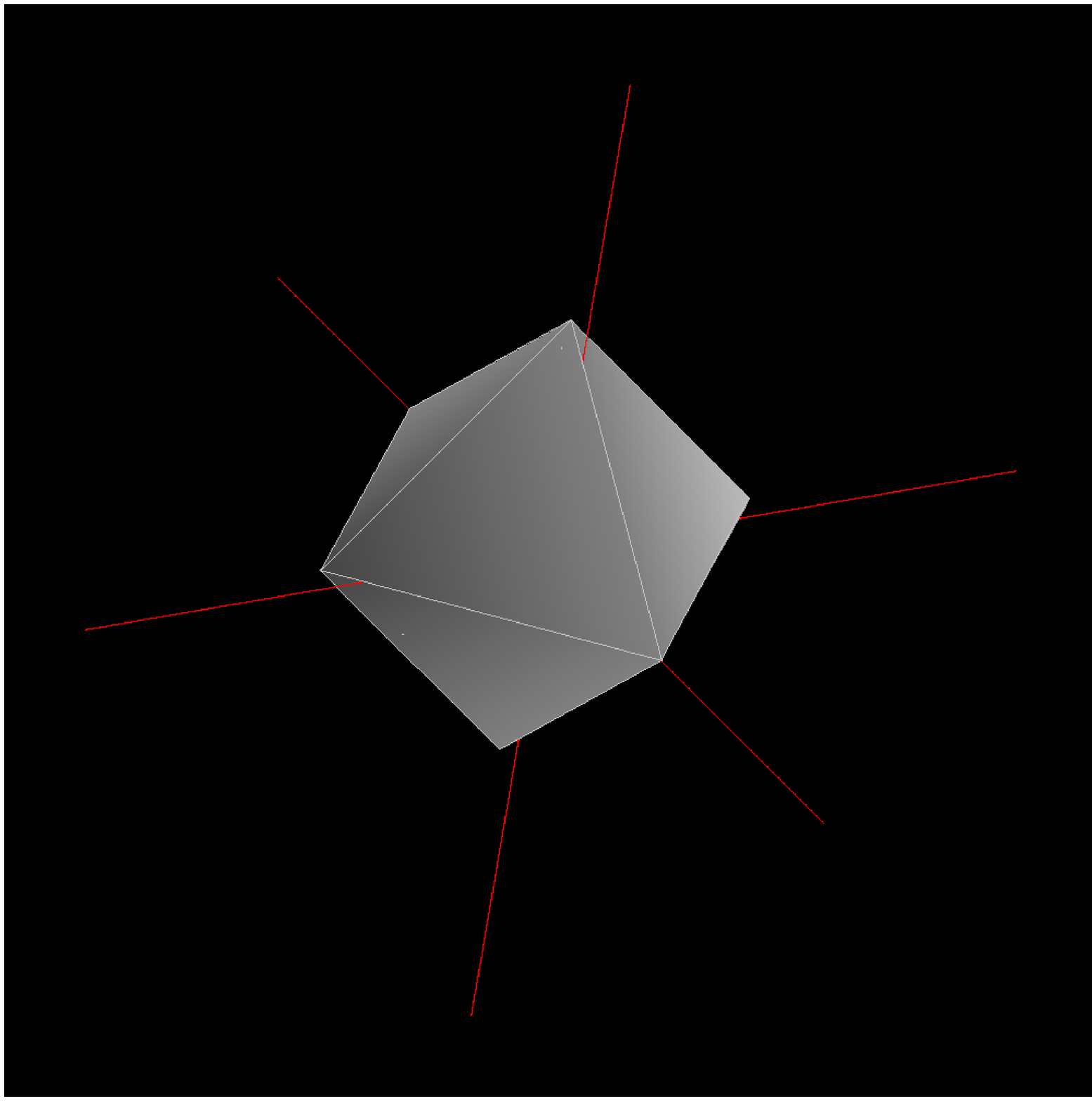}}
\subfigure[$\cP^{30}$, $\lc -2,2\rc^3$]{
	\includegraphics[scale = 0.25]{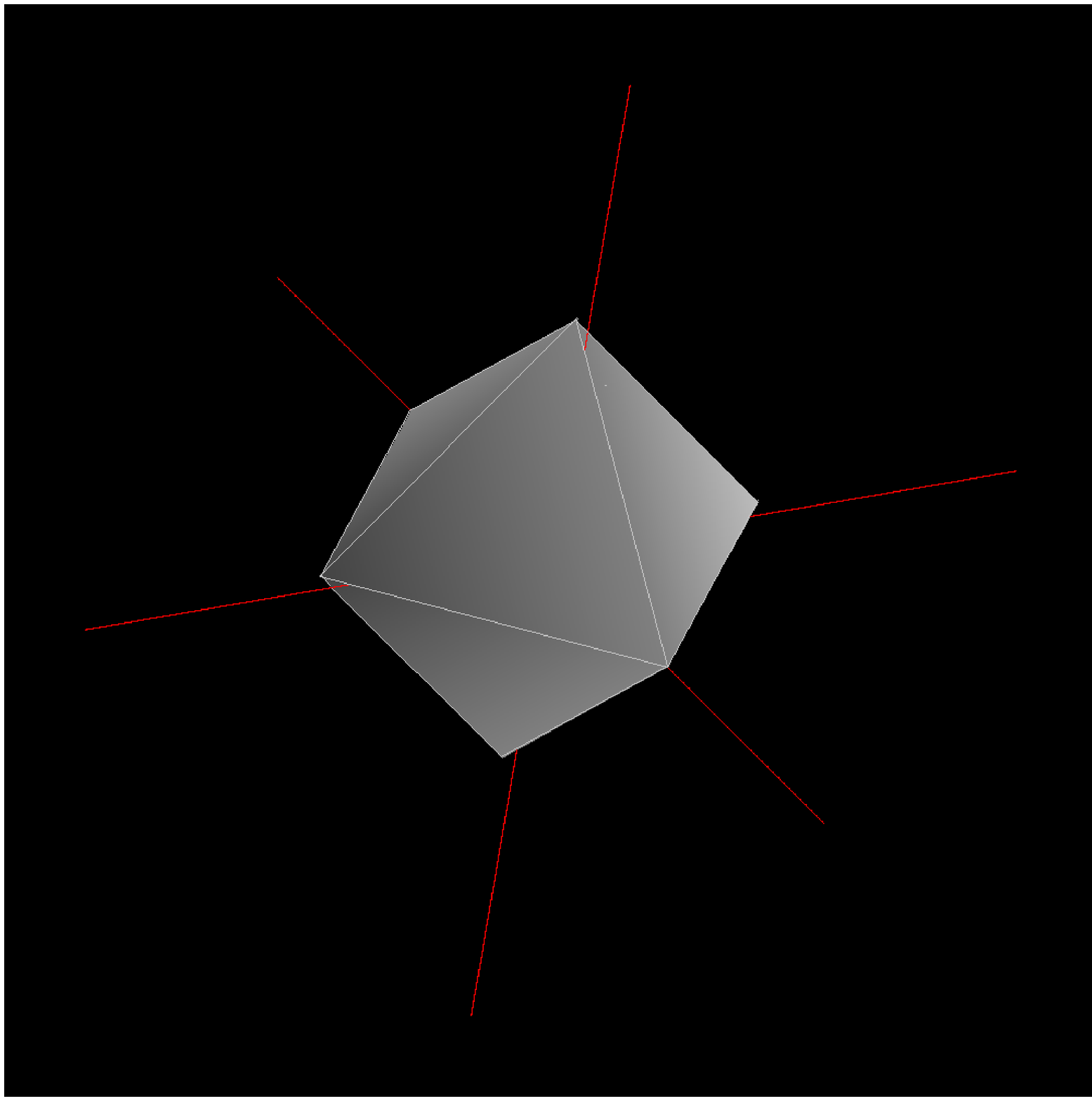}}	
\caption{Perplexbrots for several even integers}\label{figureHyperbrots2}
\end{figure}

The next result is similar to Theorem \ref{HausdorffH2D}, that is if $ p \ra \infty$, then the sequence of generalized Perplexbrots converges to a non-empty compact subset of $\mR^3$.
	\begin{theorem}
    Let $\cP := \oa x + y \bjp + z\bjd : (x,y,z)\in \mR^3 \text{ and }  |x| + |y| + |z| \leq 1 \fa\text{.}$
	Then 
		\begin{align*}
		\lim_{n \ra \infty} h(\cP , \cP^{2n}) = 0 \text{.}
		\end{align*}
	\end{theorem}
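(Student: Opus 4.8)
The plan is to reduce the statement, exactly as in Theorem \ref{HausdorffH2D}, to the two limits $\lim_{p\to\infty}t_{p}=0$ and $\lim_{p\to\infty}\bigl(1-\tfrac{l_{p}}{2}\bigr)=0$ already established there, but now using the three-dimensional description of the Perplexbrots. By Corollary \ref{theo3.2.5}, after identifying $\mathcal{T}(1,\bjp,\bjd)$ with $\mR^{3}$ via $(x,y,z)\mapsto x+y\bjp+z\bjd$ (an isometry for $\Vert\cdot\Vert_{3}$, so that $h$ is the Hausdorff metric on $\mathcal{S}(\mR^{3})$ defined verbatim as on $\mathcal{S}(\mR^{2})$ in Section \ref{sec3.4}), we may write $\cP^{p}=c_{p}+\tfrac{l_{p}}{2}B$ and $\cP=B$, where $B:=\oa v\in\mR^{3}:\Vert v\Vert_{1}\le1\fa$ is the unit octahedron and $c_{p}:=(t_{p},0,0)$. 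Thus everything reduces to controlling the Hausdorff distance between two octahedra differing only by a translation and a dilation.

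To that end I would first record the elementary Lipschitz-type estimate
\[
h\bigl(c_{1}+r_{1}B,\ c_{2}+r_{2}B\bigr)\ \le\ \Vert c_{1}-c_{2}\Vert_{2}+\vert r_{1}-r_{2}\vert\qquad(c_{1},c_{2}\in\mR^{3},\ r_{1},r_{2}\ge0).
\]
Its proof is one line: if $x=c_{1}+r_{1}u$ with $\Vert u\Vert_{1}\le1$, then $c_{2}+r_{2}u\in c_{2}+r_{2}B$ and, since $\Vert u\Vert_{2}\le\Vert u\Vert_{1}\le1$,
\[
\Vert x-(c_{2}+r_{2}u)\Vert_{2}=\Vert(c_{1}-c_{2})+(r_{1}-r_{2})u\Vert_{2}\le\Vert c_{1}-c_{2}\Vert_{2}+\vert r_{1}-r_{2}\vert ;
\]
this bounds $d(c_{1}+r_{1}B,\,c_{2}+r_{2}B)$, and the symmetric argument bounds $d(c_{2}+r_{2}B,\,c_{1}+r_{1}B)$, whence the claim. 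Specialising to $c_{1}=0$, $r_{1}=1$, $c_{2}=c_{p}$, $r_{2}=l_{p}/2$ gives
\[
h(\cP,\cP^{p})\ \le\ \vert t_{p}\vert+\Bigl\vert 1-\tfrac{l_{p}}{2}\Bigr\vert
\]
for every even integer $p\ge2$. Taking $p=2n$ and letting $n\to\infty$, the right-hand side tends to $0$ by the two limits recalled in the first paragraph, which is the assertion.

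I do not expect a genuine obstacle here: the statement is a faithful three-dimensional copy of Theorem \ref{HausdorffH2D}, with Corollary \ref{theo3.2.5} supplying the octahedral shape just as Theorem \ref{t3.2} supplied the square in the planar case, and the relevant limits of $t_{p}$ and $l_{p}$ are already in hand. The one point to be careful about is that the Hausdorff distance must be estimated in the \emph{Euclidean} metric on $\mR^{3}$, not in the $\ell^{1}$ metric in which the Perplexbrots happen to be balls; this is exactly what the inequality $\Vert u\Vert_{2}\le\Vert u\Vert_{1}$ takes care of. If one wanted the exact value of $h(\cP,\cP^{p})$ rather than an upper bound, one could instead transcribe the corner-by-corner computation of Theorem \ref{HausdorffH2D}, using the rotational symmetry of $\cP$ and $\cP^{p}$ about the real axis to locate where the relevant maxima are attained; but the estimate above already suffices for the stated limit.
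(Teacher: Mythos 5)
Your proof is correct, but it does not follow the paper's route. The paper's own ``proof'' is a one-line remark that the argument is similar to the two-dimensional case, i.e.\ it implicitly transcribes the corner-by-corner computation of Theorem \ref{HausdorffH2D}: compute $d(\cP,\cP^{p})$ and $d(\cP^{p},\cP)$ exactly by arguing that the extremal distances are attained at vertices, obtaining $h(\cP,\cP^{p})=\max\oa |(t_p+\frac{l_p}{2})-1|, |(t_p-\frac{l_p}{2})+1|\fa$, and then let $p\ra\infty$. You instead prove the general Lipschitz estimate $h(c_1+r_1B,\,c_2+r_2B)\le \Vert c_1-c_2\Vert_2+|r_1-r_2|$ for translated and dilated copies of the unit cross-polytope $B$, and apply it with $c_2=(t_p,0,0)$, $r_2=l_p/2$; this yields only an upper bound $|t_p|+|1-\frac{l_p}{2}|$ rather than the exact Hausdorff distance, but that is all the limit requires. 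Your route is arguably cleaner and more rigorous: it sidesteps the informal geometric claims of the planar proof (``it is easy to see that the minimum occurs on the boundary\dots attains its maximum at the right corner''), which are harder to justify carefully for octahedra in $\mR^3$, and it works verbatim for any convex body contained in the Euclidean unit ball. What it gives up is the exact value of $h(\cP,\cP^{p})$, which the paper's approach (in principle) delivers. All the ingredients you rely on check out: the identification of $\mT(1,\bjp,\bjd)$ with $\mR^3$ is indeed an isometry for $\Vert\cdot\Vert_3$, Corollary \ref{theo3.2.5} gives $\cP^{p}=(t_p,0,0)+\frac{l_p}{2}B$, the inequality $\Vert u\Vert_2\le\Vert u\Vert_1$ is what makes the one-line estimate legitimate in the Euclidean metric, and the limits $t_p\ra 0$ and $\frac{l_p}{2}\ra 1$ follow from $t_p+\frac{l_p}{2}=\frac{p-1}{p^{p/(p-1)}}\ra 1$ and $t_p-\frac{l_p}{2}=-2^{1/(p-1)}\ra -1$.
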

	
	\begin{proof}
	The proof is similar to the two-dimensional case. \hfill $\qed$
	\end{proof}
	
\section*{Conclusion}
In this article, we treated Multibrot sets for polynomial of even degrees. The characterization that we obtained for the intersection of the real line with a Multibrot set implies that the Hyperbrots and the Perpexbrots for polynomial of even degrees are squares and regular octahedrons respectively.

This work concludes a sequence of previous works on the same topic. If we join all the results of this work with results of \cite{RochonParise} and \cite{PariseRochon2}, we have finally proved the following theorems.
\begin{theorem}
Let $\mManp$ be the Mandelbrot set for the polynomial $Q_{p,c}(z)=z^p+c$ where $z,c \in \mC$ and $p\geq 2$ an integer. Then, we have two cases for the intersection $\mManp \cap \mR$:
	\begin{itemize}
	\item[i.] If $p$ is even, then $\mManp \cap \mR =\left[-2^{\frac{1}{p-1}},(p-1)p^{\frac{-p}{p-1}} \right]$;
	\item[ii.] If $p$ is odd, then $\mManp \cap \mR=\left[ -(p-1)p^{\frac{-p}{p-1}} , \, (p-1)p^{\frac{-p}{p-1}} \right]$.
	\end{itemize}
\end{theorem}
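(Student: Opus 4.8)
The plan is to deduce the statement from results already established. Part (i) is exactly Theorem \ref{t2.3.1}, proved in Section \ref{proofOfMainTheo}, so nothing new is required there. Part (ii) is the main result of \cite{RochonParise} and \cite{PariseRochon2}, and the shortest write-up simply invokes those papers; if one prefers a uniform, self-contained treatment, I would re-run the strategy of Section \ref{proofOfMainTheo} for odd $p$, which is in fact slightly simpler than the even case.

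For odd $p$ the key structural point is that $Q_{p,c}(x)=x^p+c$ is increasing on all of $\mR$, not merely on $[0,\infty)$, so every real orbit of $Q_{p,c}$ is monotone. As before I would set $g_c(x)=x^p-x+c$, whose real zeros are the fixed points of $Q_{p,c}$, and split into $c\ge 0$ and $c<0$. The case $c\ge 0$ is handled exactly as in the proof of Theorem \ref{t2.3.1}, using Lemma \ref{lem3.1.1}: one finds $c\in\mManp$ iff $g_c$ has a non-negative zero, and since the unique positive critical point of $g_c$ is $x=p^{-1/(p-1)}$ with $\min_{x\ge 0}g_c(x)=-(p-1)p^{-p/(p-1)}+c$, this holds iff $c\le (p-1)p^{-p/(p-1)}$.

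The case $c<0$ is the only place where the odd argument genuinely diverges from the even one, and I expect it to be essentially the only real step. Here $Q_{p,c}(0)=c<0$ and $Q_{p,c}(c)=c^p+c<c$, so the orbit of $0$ is strictly decreasing; hence $c\in\mManp$ iff this orbit converges, which by monotonicity is equivalent to $Q_{p,c}$ having a fixed point $\le 0$, i.e. to $g_c$ having a zero in $(-\infty,0]$. Since $g_c'(x)=px^{p-1}-1$ vanishes at $x=\pm p^{-1/(p-1)}$, the function $g_c$ has a local maximum at $x_-=-p^{-1/(p-1)}$ of value $g_c(x_-)=(p-1)p^{-p/(p-1)}+c$, and because $g_c\ra-\infty$ as $x\ra-\infty$ a zero in $(-\infty,0]$ exists precisely when $g_c(x_-)\ge 0$, i.e. when $c\ge -(p-1)p^{-p/(p-1)}$; moreover, when such a fixed point exists the decreasing orbit of $0$ cannot cross it and therefore converges. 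Combining the two cases gives the interval in (ii).

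The one subtlety I would watch carefully is the left endpoint $c=-(p-1)p^{-p/(p-1)}$: there $g_c(x_-)=0$, so $x_-$ is a (double) fixed point, and one must verify that the decreasing orbit of $0$ is still trapped above it (it is, since $0>x_-$ and $x_-$ is fixed), so that the endpoint is included and the interval is genuinely closed. Beyond this boundary check, everything reduces to the monotone-orbit reasoning already developed in Section \ref{proofOfMainTheo} together with Lemma \ref{lem3.1.1}.
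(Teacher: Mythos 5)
Your primary route is exactly the paper's: the theorem is stated in the conclusion as a compilation, with part (i) being Theorem \ref{t2.3.1} and part (ii) imported from \cite{RochonParise} and \cite{PariseRochon2}, and no further argument is given. Your optional self-contained treatment of the odd case is, however, a genuinely different proof from the one in the cited papers, which (as the introduction notes) rests on a precise analysis of the roots of $g_c(z)=z^p-z+c$; you instead recycle the monotone-orbit machinery of Section \ref{proofOfMainTheo}, exploiting that for odd $p$ the map $Q_{p,c}$ is increasing on all of $\mR$ so that the orbit of $0$ is monotone in both cases, reducing everything to the existence of a fixed point on the correct side of $0$, i.e.\ to the sign of $g_c$ at its two critical points $\pm p^{-1/(p-1)}$. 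The computations check out (in the case $c\ge 0$ one should note, as Lemma \ref{lem3.1.1} implicitly handles, that the negative fixed point which always exists for odd $p$ lies below $0$ and so does not obstruct escape), and your boundary check at $c=-(p-1)p^{-p/(p-1)}$ correctly confirms the interval is closed. What your approach buys is a uniform, elementary argument covering both parities with the same two lemmas; what the citation route buys is brevity and consistency with the already-published odd-degree result.
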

	
\begin{theorem}
The Hyperbrot $\mHyb^p$ is a square for any integer $p \geq 2$ and the sequence $\oa \mHyb^p \fa_{p=2}^{\infty}$ converges to a square with unit half-diagonal. Moreover, the Perplexbrot is a regular octahedron for any integer $p \geq 2$ and the sequence $ \oa \cP^{p} \fa_{p=2}^{\infty}$ converges to a regular octahedron with unit half-diagonal.
\end{theorem}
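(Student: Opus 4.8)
This last theorem is a synthesis rather than a fresh argument: the even-degree case has just been settled, so it suffices to juxtapose it with the odd-degree results of \cite{PariseRochon2} (with \cite{MET} covering the historical case $p=2$, which is in any event even, and \cite{RochonParise} supplying the slicing machinery) and to record the behaviour of the two families in the Hausdorff metric. The plan is to split the integers $p\geq2$ into the even and the odd subsequences. For even $p$, Theorem~\ref{t3.2} writes $\mHyb^p$ as $\oa x+y\bj : |x-t_p|+|y|\leq l_p/2\fa$, a square with axis-parallel diagonals centred at $(t_p,0)$ of half-diagonal $l_p/2$, and Corollary~\ref{theo3.2.5} writes $\cP^p$ as the regular octahedron $\oa x+y\bjp+z\bjd : |x-t_p|+|y|+|z|\leq l_p/2\fa$. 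For odd $p>2$, the odd analogue of Theorem~\ref{t2.3.1} (namely $\mManp\cap\mR=\lc -r_p,r_p\rc$ with $r_p:=(p-1)p^{-p/(p-1)}$) combined with the same reduction of hyperbolic and perplex dynamics to real dynamics used in the proofs of Theorem~\ref{t3.2} and Lemma~\ref{lem3.3.1} gives $\mHyb^p=\oa|x|+|y|\leq r_p\fa$ and $\cP^p=\oa|x|+|y|+|z|\leq r_p\fa$, centred at the origin; these are exactly the characterizations of \cite{PariseRochon2}. This already yields ``$\mHyb^p$ is a square'' and ``$\cP^p$ is a regular octahedron'' for every integer $p\geq2$.

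For the convergence statements, put $\mHyb:=\oa x+y\bj : |x|+|y|\leq1\fa$ and $\cP:=\oa x+y\bjp+z\bjd : |x|+|y|+|z|\leq1\fa$; these are a square and a regular octahedron, each with unit half-diagonal, since the corners lie at distance $1$ from the centre. Theorem~\ref{HausdorffH2D} gives $h(\mHyb,\mHyb^{2n})\ra0$, so the even subsequence of $\oa\mHyb^p\fa_{p\geq2}$ converges to $\mHyb$. For the odd subsequence, $\mHyb^p\subset\mHyb$ (because $r_p<1$) and, by the same reasoning as in the proof of Theorem~\ref{HausdorffH2D} (parallel sides, so the Hausdorff distance is realised at the corners), $h(\mHyb,\mHyb^p)=1-r_p$, while $r_p=\tfrac{p-1}{p}\,p^{-1/(p-1)}\ra1$. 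Since both the even- and the odd-indexed subsequences of $\oa\mHyb^p\fa_{p\geq2}$ converge to $\mHyb$ in $(\mathcal{S}(\mR^2),h)$, and a sequence in a metric space converges as soon as its even and odd subsequences share a limit, the whole sequence converges to $\mHyb$. The octahedron statement is word-for-word identical, using the $\mR^3$-version of Theorem~\ref{HausdorffH2D} proved just above for the even subsequence and $h(\cP,\cP^p)=1-r_p$ along odd $p$.

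The only non-formal ingredient, and the step I expect to require genuine care, is the asymptotics behind the limits invoked in the proof of Theorem~\ref{HausdorffH2D}, namely $t_p\ra0$ and $l_p/2\ra1$, together with $r_p\ra1$ on the odd side. The subtlety is that $t_p$ and $l_p$ mix the competing factors $p$ and $(2p)^{1/(p-1)}=\exp\!\op\tfrac{\ln(2p)}{p-1}\fp$, so one expands $(2p)^{1/(p-1)}=1+\tfrac{\ln(2p)}{p-1}+O\big((\log p/p)^{2}\big)$ and $p^{1/(p-1)}=1+O(\log p/p)$, substitutes into the formulas for $t_p$ and $l_p$, and reads off $t_p=O(\log p/p)\ra0$ and $l_p=2+O(\log p/p)\ra2$; the estimate $r_p\ra1$ follows at once from the same expansion of $p^{1/(p-1)}$. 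Everything else is bookkeeping: matching the two regimes at their common limit and invoking the subsequence criterion for convergence in $(\mathcal{S}(\mR^2),h)$ and in its $\mR^3$ analogue.
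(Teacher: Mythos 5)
Your proposal is correct and matches the paper's intent exactly: the paper offers no separate proof of this concluding theorem, stating only that it follows by joining Theorem~\ref{t3.2}, Theorem~\ref{HausdorffH2D}, Corollary~\ref{theo3.2.5} and their $\mR^3$ analogue with the odd-degree results of \cite{PariseRochon2}, which is precisely the synthesis you carry out. The details you supply beyond that remark --- the odd-subsequence estimate $h(\mHyb,\mHyb^{p})=1-r_p$ with $r_p\ra 1$, the asymptotics $t_p\ra 0$ and $l_p\ra 2$, and the even/odd subsequence criterion --- are all sound.
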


\section*{Acknowledgement}
DR is grateful to the Natural Sciences and Engineering Research Council of Canada (NSERC) for financial support. TR thanks NSERC and the Canada Research Chairs Program for financial support. POP would also like to thank the NSERC for the award 
of a graduate research grant.


\end{document}